\newtheorem{teo}{Theorem}[section]
\newtheorem*{mainteoA}{Theorem A.}
\newtheorem*{mainteoB}{Theorem B.}
\newtheorem*{mainteoC}{Theorem C.}
\newtheorem{prop}[teo]{Proposition}
\newtheorem{cor}[teo]{Corollary}
\newtheorem{lemma}[teo]{Lemma}
\theoremstyle{definition}
\newtheorem{remark}[teo]{Remark}
\newtheorem{say}[teo]{}
\numberwithin{equation}{section}
\newcommand{\ra}{\rightarrow}
\newcommand{\C}{\mathbb{C}}
\newcommand{\R}{\mathbb{R}}
\newcommand{\Zeta}{{\mathbb{Z}}}
\newcommand{\meno}{^{-1}}
\newcommand{\alfa}{\alpha}
\newcommand{\alf}{\alpha}
\newcommand{\vacuo}{\emptyset}
\newcommand{\la}{\lambda}
\newcommand{\restr}[1]          {\vert_{#1}}
\newcommand{\Aut}{\operatorname{Aut}}
\newcommand{\Ann}{\operatorname{Ann}}
\newcommand{\Crit}{\operatorname{Crit}}
\renewcommand{\setminus}{-}
\newcommand{\cinf}{C^\infty}
\newcommand{\om}{\omega}
\newcommand{\eps}{\varepsilon}
\renewcommand{\phi}{\varphi}
\newcommand{\lds}{\ldots}
\newcommand{\cd}{\cdot}
\newcommand{\im}{\operatorname{Im}}
\newcommand{\sx}{\langle}
\newcommand{\xs}{\rangle}
\newcommand{\lra}{\longrightarrow}
\newcommand{\rank}{\operatorname{rank}}
\newcommand{\OO}{\mathcal{O}}
\newcommand{\XX}{\mathcal{X}}
\newcommand{\PP}{\mathbb{P}}
\renewcommand{\phi}             {\varphi}
\newcommand{\debar }            {\bar {\partial } }
\newcommand{\campcoo}      [1]  { \frac {\partial } {\partial #1}}
\newcommand{\teta}{\hat{\eta}}%{\vartheta}%{\tilde{\eta}}
\newcommand{\seconda}{\operatorname{II}}
\newcommand{\tro}{\tilde{\rho}}
\begin{document}

\author{Elisabetta Colombo, Paola Frediani and Alessandro Ghigi}

\title{On totally geodesic submanifolds in the Jacobian locus}

\address{Universit\`{a} di Milano} \email{elisabetta.colombo@unimi.it}
\address{Universit\`{a} di Pavia} \email{paola.frediani@unipv.it}
\address{Universit\`a di Milano Bicocca}
\email{alessandro.ghigi@unimib.it} 

\thanks{The first author was partially supported by PRIN
    2010-11 MIUR ``Geometria delle Varietà Proiettive".  The second
    and third authors were partially supported by PRIN 2009 MIUR
    ''Moduli, strutture geo\-me\-tri\-che e loro applicazioni''.  The
    second author was partially supported also by FIRB 2012 ''Moduli
    spaces and applications''.  The third author was supported also by
    FIRB 2012 ''Geometria differenziale e teoria geometrica delle
    funzioni'' and also by a grant of Max-Planck Institut f\"ur
    Mathematik, Bonn. The three authors were partially supported by INdAM (GNSAGA)
  } \subjclass[2000]{14H10;14H15;14H40;32G20}

\begin{abstract}
  We study submanifolds of $A_g$ that are totally geodesic for the
  locally symmetric metric and which are contained in the closure of
  the Jacobian locus but not in its boundary.  In the first section we
  recall a formula for the second fundamental form of the period map
  $M_g \hookrightarrow A_g$ due to Pirola, Tortora and the first
  author. We show that this result can be stated quite neatly using a
  line bundle over the product of the curve with itself.  We give an
  upper bound for the dimension of a germ of a totally geodesic
  submanifold passing through $[C] \in M_g$ in terms of the gonality
  of $C$.  This yields an upper bound for the dimension of a germ of a
  totally geodesic submanifold contained in the Jacobian locus, which
  only depends on the genus.  We also study the submanifolds of $A_g$
  obtained from cyclic covers of $\PP^1$. These have been studied by
  various authors.  Moonen determined which of them are Shimura
  varieties using deep results in positive characteristic.  Using our
  methods we show that many of the submanifolds which are not Shimura
  varieties are not even totally geodesic.

\end{abstract}

\maketitle

\tableofcontents{}

\section{Introduction}

\begin{say}
  Denote by $A_g$ the moduli space of principally polarized abelian
  varieties of dimension $g$, by $M_g$ the moduli space of smooth
  curves of genus $g$ and by $j : M_g \ra A_g$ the period mapping or
  Torelli mapping.  {Both} $M_g$ and $A_g$ are complex orbifolds (or smooth
  stacks) {and} $A_g$ is endowed with a locally symmetric metric, the
  so-called Siegel metric.  One expects the Jacobian locus, that is
  the image $j(M_g) \subset A_g$, to be rather curved with respect to
  the Siegel metric. In particular, it should contain very few totally
  geodesic submanifolds of $A_g$.  Another reason for this expectation
  comes from arithmetic geometry. Indeed for a special class of
  totally geodesic submanifolds (Shimura varieties) it has been
  conjectured by Coleman and Oort that for large genus no positive
  dimensional Shimura variety is contained in the closure of the
  Jacobian locus (in $A_g$) and meets the Jacobian locus itself.
  Moonen \cite{moonen-JAG} has proven that an algebraic totally
  geodesic submanifold is a Shimura subvariety if and only if it
  contains a complex multiplication point.  For results on Shimura
  subvarieties contained in $\overline{j(M_g)}$ we refer to
  \cite{hain,moonen-JAG,moonen-oort,rohde,moo,andreatta,mvz,kukulies,moeller}.

  Outside the hyperelliptic locus the period map is an orbifold
  immersion. For $g\geq 4$ the Jacobian locus $j(M_g) $
    has dimension strictly smaller than $A_g$.  Therefore it makes
    sense to compute the second fundamental form of $j(M_g) \subset
    A_g$ and to study its metric properties by infinitesimal methods.
  The second fundamental form has been studied by Pirola, Tortora and
  the first author \cite{cpt}, where an expression for it is given and
  it is proven that the second fundamental form lifts the second
  Gaussian map, as stated in an unpublished paper by Green and
  Griffiths \cite{green}.  In particular the computation of the second
  fundamental form on $\xi_p \odot \xi_p$ (where $\xi_p$ is a Schiffer
  variation at the point $p$ on the curve) reduces to the evaluation
  of the second gaussian map at the point $p$.  These results have
  been used in \cite{cf1} to compute the curvature of the restriction
  to $M_g$ of the Siegel metric.  In \cite{cf1} there is an explicit
  formula for the holomorphic sectional curvature of $M_g$ in the
  direction $\xi_p$ in terms of the holomorphic sectional curvature of
  $ A_g$ and the second Gaussian map.

  It is much harder to use the formula in \cite{cpt} to compute the
  second fundamental form on $\xi_p\odot \xi_q$ when $p\neq q$. In
  fact the formula contains the evaluation at $q$ of a meromorphic
  1-form on the curve, called $\eta_p$, which has a double pole at $p$
  and is defined by Hodge theory. In general it seems rather hard to
  control the behaviour of $\eta_p$, in a way to get constraints on
  the second fundamental form.
\end{say}
\begin{say}
  In this paper we give a global and more intrinsic description of
  this form. We show that as $p$ varies on the curve the forms
  $\eta_p$ glue to give a holomorphic section $\teta$ of the line
  bundle $K_S(2\Delta)$, where $S=C\times C$ and $\Delta \subset S$ is
  the diagonal.  With this interpretation we are able to prove that
  the second fundamental form coincides with the multiplication by
  $\teta$.

  More precisely, fix a genus $g$, which will always be assumed
  greater than 3, and fix $[C]\in M_g$ outside the hyperelliptic
  locus.  The conormal bundle of $j: M_g \subset A_g$ at $[C]$ can be
  identified with $I_2(K_C) $, which is the kernel of the
  multiplication map $S^2H^0(C,K_C) \ra H^0(C,2K_C)$.  Hence the
  second fundamental form can be seen as a map $\rho : I_2(K_C) \ra
  S^2H^0(C,2K_C) $. Let $S=C\times C$ and let $\Delta $ be the
  diagonal.  By K\"unneth formula $H^0(S, K_S) = H^0(C,K_C) \otimes
  H^0(C,K_C) $ and $H^0(S, 2K_S) = H^0(C,2K_C) \otimes H^0(C,2K_C) $.
  In particular $I_2(K_C) \subset H^0(S, K_S(-2\Delta))$.
\end{say}
\begin{mainteoA} [See Theorem \ref{prodotto}]
  The second fundamental form $\rho$ is the restriction to $I_2$ of
  the multiplication map
  \begin{gather*}
    H^0(S, K_S(-2\Delta)) \lra H^0(S,2K_S) \qquad Q \mapsto Q \cd
    \teta .
  \end{gather*}
\end{mainteoA}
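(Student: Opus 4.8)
The plan is to compare the two sides as holomorphic sections of $2K_S$ over $S=C\times C$. The element $\rho(Q)$ lies in $H^0(S,2K_S)$ through the inclusion $S^2H^0(C,2K_C)\subset H^0(C,2K_C)\otimes H^0(C,2K_C)=H^0(S,2K_S)$, and $Q\cdot\teta$ lies there by construction; since two holomorphic sections agreeing on a dense open subset coincide, it suffices to match their values at points $(p,q)$ with $p\neq q$. Concretely I would pair both sides with products $\xi_p\otimes\xi_q$ of Schiffer variations: under Serre duality the pairing of $\omega\in H^0(C,2K_C)$ with $\xi_p$ is a residue at $p$ recovering the value of the quadratic differential $\omega$ at $p$, so on $S$ this pairing is the evaluation of a section of $2K_S$ at $(p,q)$. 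As the Schiffer variations span $H^1(C,T_C)$ (a quadratic differential vanishing at every point is zero, so their Serre-dual annihilator is trivial), this evaluation determines each side completely.

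For the left-hand side I would quote the formula of \cite{cpt}, which for $p\neq q$ expresses $\rho(Q)(\xi_p,\xi_q)$ through the pairing of $Q$ with the meromorphic $1$-form $\eta_p$ evaluated at $q$. The point of the reformulation is that, by the gluing recalled above, $\teta|_{\{p\}\times C}=\eta_p$, so that $\eta_p(q)$ is exactly the value $\teta(p,q)$ of the global section. For the right-hand side, since $Q$ vanishes to order two along $\Delta$ while $\teta$ has at most a double pole there, $Q\cdot\teta$ is a genuine holomorphic section of $2K_S$, and on $\{p\neq q\}$ its value at $(p,q)$ factors as $Q(p,q)\,\teta(p,q)=Q(p,q)\,\eta_p(q)$. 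Matching this with the formula of \cite{cpt} yields the identity for $p\neq q$, hence everywhere.

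Finally I would check that $Q\cdot\teta$ indeed lands in the symmetric subspace $S^2H^0(C,2K_C)$: this follows from the invariance of $\teta$ under the involution exchanging the two factors of $S$ together with the symmetry of $Q\in I_2$. I expect the main obstacle to be the reconciliation of normalisations near the diagonal: one must match the Hodge-theoretic definition of $\eta_p$ and the conventions for the second fundamental form in \cite{cpt} with the residue computation of the multiplication map, ensuring that the double pole of $\teta$ against the double zero of $Q$ produces no spurious contribution as $q\to p$ and that all residues, signs and factors of $2$ agree. Once this is settled, the diagonal values $\rho(Q)(\xi_p,\xi_p)$ --- governed by the second Gaussian map, as in \cite{cpt,cf1} --- follow from the identity already established, by continuity.
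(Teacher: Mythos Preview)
Your proposal is correct and follows essentially the same route as the paper: evaluate both $\rho(Q)$ and $Q\cdot\teta$ on $\xi_u\odot\xi_v$ at distinct points using Theorem~\ref{copito} and the identification $\teta\restr{\{x\}\times C}=\eta_x$, then conclude by the fact that such tensors span $S^2H^1(C,T_C)$ (the paper's phrasing) or equivalently by density on $S\setminus\Delta$. Your anticipated obstacle near the diagonal is a non-issue in this approach, since neither side needs to be computed there; the symmetry of $\teta$ (Lemma~\ref{sym}) together with that of $Q$ is all that is required, exactly as you note.
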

\begin{say}Based on these results on the second fundamental form we
  get some constraints on the existence of totally geodesic
  submanifolds of $A_g$ contained in $M_g$. Since our methods are
  local in nature, the results apply to germs of such submanifolds.
  We get upper bounds for the dimension of totally geodesic germs
  passing through $[C]\in M_g$ in terms of the gonality of the curve
  $C$.
\end{say}

\begin{mainteoB}
[See Theorem \ref{stima1}]
Assume that $C$ is a $k$-gonal curve of genus $g$ with $g\geq 4$ and
$k\geq 3$. Let $Y$ be a germ of a totally geodesic submanifold of
$A_g$ which is contained in the jacobian locus and passes through
$j([C])=[J(C)]$. Then $\dim(Y) \leq 2g+k - 4$.
\end{mainteoB}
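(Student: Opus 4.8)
The plan is to translate the totally geodesic hypothesis into the vanishing of the second fundamental form on $S^2 W$, where $W := T_{[C]}Y \subseteq T_{[C]}M_g = H^1(C,T_C)$, and then to read this vanishing through Theorem A as an isotropy condition for an explicit family of bilinear forms. First I would recall the standard fact of submanifold geometry: if $Y\subseteq M_g$ is totally geodesic in $A_g$, then for every $v\in W$ the normal component of the second fundamental form of $j\colon M_g\ra A_g$ at $v\odot v$ vanishes. Under the conormal identification $N^*_{[C]} = I_2(K_C)$ and $T^*_{[C]}M_g = H^0(C,2K_C)$, this says exactly that
\begin{gather*}
  \rho(Q)(v\odot w)=0\qquad\text{for all }v,w\in W,\ Q\in I_2(K_C).
\end{gather*}
Using $H^1(C,T_C)=H^0(C,2K_C)^*$ and the K\"unneth decomposition $H^0(S,2K_S)=H^0(2K_C)\otimes H^0(2K_C)$, Theorem A identifies each $\rho(Q)$ with $Q\cd\teta\in H^0(S,2K_S)$, so the condition becomes $\langle v\otimes w,\,Q\cd\teta\rangle=0$. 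In other words $W$ is totally isotropic for every bilinear form $b_Q(v,w):=\langle v\otimes w,\,Q\cd\teta\rangle$.

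The engine of the estimate is the following reformulation. Contracting $Q\cd\teta\in H^0(2K_C)\otimes H^0(2K_C)$ against a fixed vector in the first factor defines a linear map $v\mapsto M_Q(v)\in H^0(2K_C)$, and the isotropy of $W$ says precisely that $M_Q(W)\subseteq\Ann(W)$, where $\Ann(W)\subseteq H^0(2K_C)$ is the annihilator, of dimension $3g-3-\dim W$. Hence, fixing any $0\neq v\in W$,
\begin{gather*}
  \dim W\ \leq\ (3g-3)-\dim\spam\{\,M_Q(v):Q\in I_2(K_C)\,\}.
\end{gather*}
Since $\dim H^1(C,T_C)=3g-3$, the asserted bound $\dim W\leq 2g+k-4$ is equivalent to producing, for a suitable $v\in W$, a span of dimension at least $g-k+1$; equivalently, to exhibiting a $(g-k+1)$-dimensional subspace of $H^0(2K_C)$ annihilating $W$.

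This is where the gonality enters. The number $g-k+1$ is not an accident: if $D=p_1+\cds+p_k$ is a general fibre of the $g^1_k$, the base-point-free pencil trick gives $\ker\big(H^0(\OO_C(D))\otimes H^0(K_C)\ra H^0(K_C+D)\big)\cong H^0(K_C-D)$, which has dimension $g-k+1$, and geometrically the canonical image of $D$ spans only a $(k-2)$-plane. I would exploit this by computing $M_Q(v)$ explicitly from the Schiffer expansion: for a Schiffer variation $\xi_p$ one finds $M_Q(\xi_p)=\eta_p\cd\big(Q|_{\{p\}\times C}\big)$, where $\eta_p=\teta|_{\{p\}\times C}$ is the meromorphic form with a double pole at $p$ and $Q|_{\{p\}\times C}\in H^0(K_C-2p)$; so the relevant span is the image of the restriction $I_2(K_C)\ra H^0(K_C-2p)$, whose rank I would bound below by $g-k+1$ using the quadrics through $C$ produced (via the pencil trick) from $H^0(K_C-D)$ and the $(k-2)$-plane spanned by $D$.

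The genuine difficulty is that one cannot simply take $v=\xi_p$: a pure Schiffer direction does not lie in a totally geodesic germ, since $\rho(Q)(\xi_p\odot\xi_p)$ is the (generically nonzero) second Gaussian map. The crux is therefore to run the rank estimate for a general $v\in W$, either by a specialization argument that reduces the computation to Schiffer data concentrated on a fibre $D$ of the gonal map, or by directly constructing the $(g-k+1)$-dimensional annihilating subspace of $H^0(2K_C)$ out of $H^0(K_C-D)$ and $\teta$. Controlling the contribution of the pole of $\teta$ along $\Delta$ (equivalently the diagonal terms governed by the second Gaussian map) and verifying the required independence from the special geometry of the $k$-gonal curve is the step I expect to absorb most of the work; the conclusion $\dim W\leq 2g+k-4$ then follows from the displayed inequality.
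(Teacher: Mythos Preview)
Your proposal has a genuine gap, which you yourself flag: you need a lower bound on $\dim\spam\{M_Q(v):Q\in I_2\}$ for some $v\in W$, but the only vector for which you can compute this span is a Schiffer variation $\xi_p$, and such a vector does not lie in $W$. Passing from $\xi_p$ to a general $v\in W$ is not a technicality; nothing in your setup controls the span $\{M_Q(v):Q\}$ uniformly over $v$, and there is no obvious specialization argument that forces Schiffer data into $W$. As written, the argument does not close.

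The paper sidesteps this by reversing the roles of $Q$ and $v$: instead of fixing $v\in W$ and varying $Q$, it produces a \emph{single} quadric $Q\in I_2(K_C)$ with $\operatorname{rank}\rho(Q)\geq 2g-2k+1$, and then uses only the elementary fact that a totally isotropic subspace for a symmetric form of rank $r$ on $\C^{3g-3}$ has dimension at most $3g-3-\lceil r/2\rceil$, giving $\dim W\leq 2g+k-4$. The quadric is the rank-4 quadric $Q=xt_1\odot yt_2-xt_2\odot yt_1$ built from the gonal pencil $\langle x,y\rangle\subset H^0(F)$ and a pencil $\langle t_1,t_2\rangle\subset H^0(K_C-F)$. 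The rank bound comes from restricting $\rho(Q)$ to the span $W'$ of Schiffer variations $\xi_{P_1},\dots,\xi_{P_d}$ at a generic fibre of the map associated to the \emph{second} pencil: since the $P_i$ share a fibre of that pencil, $Q(P_i,P_j)=0$ for $i\neq j$, so all off-diagonal entries of $\rho(Q)|_{W'}$ vanish by the formula $\rho(Q)(\xi_{P_i}\odot\xi_{P_j})=-4\pi i\,\eta_{P_i}(P_j)\,Q(P_i,P_j)$, while the diagonal entries equal $-2\pi i\,\mu_2(Q)(P_i)=-2\pi i\,\mu_{1,F}(x\wedge y)(P_i)\,\mu_{1,K_C-F}(t_1\wedge t_2)(P_i)\neq 0$ because the $P_i$ avoid the ramification and base loci. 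Thus $\rho(Q)|_{W'}$ is diagonal and nondegenerate, so $\operatorname{rank}\rho(Q)\geq d>2g-2k$. Note that here the Schiffer variations are used only as a test subspace on which to detect the rank of $\rho(Q)$, not as elements of $W$; this is exactly what your approach lacks.
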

This immediately yields a bound which only depends on $g$.

\begin{mainteoC}
[See Theorem \ref{stima2}]
If  $g\geq 4$ and $Y$ is a germ of a totally geodesic
submanifold of $A_g$ contained in the jacobian locus, then $\dim Y
\leq \frac{5}{2}(g-1)$.
\end{mainteoC}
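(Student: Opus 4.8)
The plan is to derive Theorem C from Theorem B (Theorem \ref{stima1}) by eliminating the gonality $k$ in favour of a bound depending only on $g$. The single geometric input needed is the classical Brill--Noether bound on gonality: every smooth curve of genus $g$ carries a pencil of degree $\lfloor (g+3)/2\rfloor$, so its gonality satisfies $k \le \lfloor (g+3)/2\rfloor \le \tfrac{g+3}{2}$. I would then split the argument according to whether the germ $Y$ is contained in the hyperelliptic locus, since Theorem \ref{stima1} is only available for $k\ge 3$.

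In the main case, suppose $Y$ is not contained in the hyperelliptic locus. Then I would choose a non-hyperelliptic point $[J(C)]\in Y$; its gonality satisfies $3\le k\le \tfrac{g+3}{2}$, where $g\ge 4$ guarantees $\lfloor (g+3)/2\rfloor \ge 3$ so that this range is non-empty. Re-centering the germ at this point and applying Theorem \ref{stima1}, then inserting the gonality bound, yields
\begin{equation*}
  \dim Y \le 2g + k - 4 \le 2g + \frac{g+3}{2} - 4 = \frac{5}{2}(g-1),
\end{equation*}
which is precisely the assertion; the inequality is an equality only when $g$ is odd and $C$ has maximal gonality.

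It remains to treat a germ $Y$ contained in the hyperelliptic locus, where Theorem \ref{stima1} does not apply. Here I would replace the infinitesimal estimate by a crude dimension count: the hyperelliptic locus in $M_g$ is parametrised by $2g+2$ branch points on $\PP^1$ modulo $\operatorname{PGL}_2$, hence has dimension $2g-1$, and since the image of $j$ there has dimension at most $2g-1$ we get $\dim Y \le 2g-1$. One then checks the elementary inequality $2g-1 \le \tfrac{5}{2}(g-1)$, valid for $g\ge 4$, which closes this case.

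The real content is carried by Theorem B; the present deduction is almost entirely arithmetic. The one step demanding a little care is the dichotomy above: I must be sure that a germ not lying in the hyperelliptic locus genuinely contains a non-hyperelliptic Jacobian at which Theorem \ref{stima1} can be invoked. This follows because the hyperelliptic locus is a closed analytic subset of positive codimension, so its complement meets the irreducible germ $Y$ in a dense open subset; re-centering $Y$ there is legitimate, as the dimension of a totally geodesic germ is independent of the chosen base point. I expect this to be the only place where the germ, rather than a single marked curve, enters the reasoning.
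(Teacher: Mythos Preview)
Your proof is correct and follows essentially the paper's own approach: apply Theorem~\ref{stima1} together with the classical gonality bound $k\le\lfloor(g+3)/2\rfloor$ and compute $2g+k-4\le \tfrac{5}{2}(g-1)$. The paper's one-line proof does not separate out the hyperelliptic case as you do (it tacitly assumes a non-hyperelliptic point on $Y$), so your version is slightly more careful, but the argument is the same.
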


\begin{say}
  For low genus one can construct examples of totally geodesic
  submanifolds contained in $M_g$ using cyclic covers of $\PP^1$, see
  e.g. \cite{dejong-noot,moo,rohde}.  These are in fact Shimura
  varieties. A complete list of the Shimura varieties that can be
  obtained in this way has been given in \cite{moo} using deep results
  in positive characteristic.  With our methods we check directly that
  these examples are indeed totally geodesic and we show that a large
  class of cyclic covers, which are not in the list of Shimura
  varieties, are not even totally geodesic (see Proposition
  \ref{suffmoo1} and Corollary \ref{suffmoo2}).
\end{say}
\begin{say}
  Other works studying totally geodesic submanifolds contained in the
  Jacobian locus include \cite{toledo,hain,dejongz}.  In particular
  Hain \cite{hain} proves the following. Let $X$ be an irreducible
  symmetric domain and consider the locally symmetric variety $
  \Gamma\backslash X$ (where $\Gamma $ is a lattice).  If there is a
  totally geodesic immersion $ \Gamma\backslash X \ra
  \overline{j(M_g)} $, and if some additional conditions are
  satisfied, then $X$ must be the complex ball.  De Jong and Zhang
  \cite{dejongz} prove a similar result under milder conditions, but
  still retaining the irreducibility assumption on $X$.  The
  techniques used in these works are global and are based on group
  cohomology and on a rigidity theorem for the mapping class group due
  to Farb and Masur \cite{farb-masur}. Our result instead applies to
  germs of totally geodesic submanifolds, since it is local in nature
  and does not require irreducibility assumptions.  The same local
  point of view is present in \cite{mok}, where the object of study
  are totally geodesic submanifolds contained in an algebraic
  subvariety of a complex hyperbolic space form.
\end{say}

{\bfseries \noindent{Acknowledgements.} }  We wish to thank Fabrizio
Andreatta and Bert van Geemen for interesting conversations.  The
second and third authors wish to thank the Max-Planck Institut f\"ur
Mathematik, Bonn for excellent conditions provided during their visit
at this institution, where part of this paper was written.

\section{Notation and preliminary results}

\begin{say} {\bf Second fundamental form.}
  Denote by $A_g$ the moduli space of principally polarized abelian
  varieties of dimension $g$, by $M_g$ the moduli space of smooth
  curves of genus $g$ and by $j : M_g \ra A_g$ the period mapping or
  Torelli mapping.  By the Torelli theorem $j$ is injective.  To study
  $j$ one can fix a level structure with $n\geq 3$ and consider
  $M_g^{(n)} \stackrel{j^{(n)}}\rightarrow A_g^{(n)}$ which is a
  smooth map between manifolds.  Since level structures play no role
  in what we are doing, it is more appropriate to think of $M_g$ and
  $A_g$ as complex orbifolds or smooth stacks, see e.g. \cite[XII,
  4]{acg}.  The period map is smooth in the orbifold sense.  Moreover
  its restriction to the set of non-hyperelliptic curves is an
  orbifold immersion \cite {os}.  By abuse of terminology we will
  henceforth omit the word "orbifold". {The moduli space}  $A_g$ is endowed with the
  Siegel metric, which is the metric induced on $A_g$ from the
  symmetric metric on the Siegel upper halfspace
  $\mathfrak{H}_g$. Outside the hyperelliptic locus we have the
  sequence of tangent bundles:
  \begin{equation}
    \nonumber
    0\rightarrow { T}{M_g} \rightarrow
    j^{*}\bigl ({T}{A_g}\bigr ) \stackrel{\pi}\rightarrow
    { N} \rightarrow 0,
  \end{equation}
  whose dual, at $[C] \in M_g$ is
  \begin{equation}
    \nonumber
    0 \rightarrow { I}_2 \rightarrow S^2
    H^0(C,K_C) \stackrel{m}\rightarrow H^0(C,2K_C)
    \rightarrow 0,
  \end{equation}
  where $I_2:={ I}_2 (K_C) $ is the set of quadrics containing the
  canonical curve and $m$ is the multiplication map (see \cite{cf1}
  for more details).  Denote by
  \begin{gather*}
    \seconda : S^2 T_{[C]}M_g=S^2H^1(C,T_C) \ra {N}_{[C]}
  \end{gather*}
  the second fundamental form of the period map with respect to the
  Siegel metric on $A_g$.  Denote by
  \begin{gather*}
    \rho : I_2\ra S^2 H^0(C, 2K_C)
  \end{gather*}
  the dual of $\seconda$.  We will refer both to $\seconda$ and to
  $\rho$ as second fundamental forms.
\end{say}

\begin{say}{\bf Schiffer variations.}
  If $C$ is a curve and $x\in C$, the coboundary of the exact sequence
  $0 \ra T_C \ra T_C(x) \ra T_C(x)\restr{x} \ra 0$ yields an injection
  $H^0(T_C(x)\restr{x}) \cong \C \hookrightarrow H^1(C,T_C)$. Elements
  in the image are called \emph{Schiffer variations} at $x$.  If
  $(U,z)$ is a chart centred at $x$ and $b\in \cinf_0(U)$ is a bump
  function which is equal to 1 on a neighbourhood of $x$, then
  \begin{gather*}
    \theta:= \frac{\debar b}{z}\cd \campcoo{z}
  \end{gather*}
  is a Dolbeault representative of a Schiffer variation at $x$.  The
  map
  \begin{gather*}
    \xi : TC \ra H^1(C,T_C) \qquad u=\la\campcoo{z} (x)\mapsto \xi_u:=
    \la^2 [\theta]
  \end{gather*}
  does not depend on the choice of the coordinates.  It is well known
  that Schiffer variations generate $H^1(C,T_C)$ \cite[p.175]{acg}.
\end{say}
\begin{lemma}
  \label{cauchy}
  Let $\beta \in H^0(C, 2K_C)$ and let $(U,z)$ be a chart centred at
  $x\in C$. If $\beta = f(z) (dz)^2$ on $U$, then $ \beta \bigl (
  \xi_{\campcoo{z}(x)} \bigr ) = 2\pi i\, f(0).$
\end{lemma}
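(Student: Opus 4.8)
The plan is to read $\beta\bigl(\xi_{\campcoo{z}(x)}\bigr)$ as the Serre duality pairing between $H^0(C,2K_C)$ and $H^1(C,T_C)$ (recall $T_C=K_C^{-1}$) and to compute it in Dolbeault cohomology. I view $\beta=f(z)(dz)^2$ as a $K_C$-valued holomorphic $1$-form and contract it against the explicit Dolbeault representative $\theta=\frac{\debar b}{z}\campcoo{z}$ of the Schiffer variation: the vector field factor $\campcoo{z}$ of $\theta$ pairs with one factor $dz$ of $\beta$, and integrating the resulting $(1,1)$-form over $C$ gives
\begin{gather*}
  \beta\bigl(\xi_{\campcoo{z}(x)}\bigr)=\int_C \frac{f(z)}{z}\,dz\wedge\debar b .
\end{gather*}
Since $\xi$ is a well-defined cohomology class, independent of the chart and of the chosen bump function, I am free to evaluate the pairing on this particular representative.

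Next I would localise the integral at $x$. Writing $\omega:=\frac{f(z)}{z}\,dz$, a meromorphic $1$-form on $U$ with a simple pole at $x$ of residue $f(0)$, a direct check using $d\bar z\wedge dz=-\,dz\wedge d\bar z$ shows that the integrand equals $-\debar(b\,\omega)$. As $C$ is a curve it carries no $(2,0)$-forms, so $\de(b\,\omega)=0$ and therefore $\debar(b\,\omega)=d(b\,\omega)$. Thus the integrand is $-d(b\,\omega)$, an exact form on the complement of $x$, and all the content sits at the pole.

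To extract it I excise a small disk $D_\eps=\{|z|\le\eps\}$ around $x$ and apply Stokes on $C-D_\eps$. The compact support of $b$ in $U$ kills every boundary contribution except the one coming from the circle $|z|=\eps$; there $b\equiv1$, so the boundary term reduces to $\oint_{|z|=\eps}\omega=\oint_{|z|=\eps}\frac{f(z)}{z}\,dz=2\pi i\,f(0)$ by Cauchy's integral formula. Letting $\eps\to0$ and keeping track of the boundary orientation of $C-D_\eps$ produces $\beta\bigl(\xi_{\campcoo{z}(x)}\bigr)=2\pi i\,f(0)$.

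The computation itself is elementary; the only delicate point is the bookkeeping of signs and orientations --- the normalisation of the Dolbeault pairing, the sign in $d\bar z\wedge dz=-\,dz\wedge d\bar z$, and the induced orientation on $\partial(C-D_\eps)$ --- which must be arranged consistently so that exactly the factor $2\pi i$ from the Cauchy/residue computation survives, with no spurious constant. This appearance of the Cauchy integral formula is precisely what the name of the lemma records.
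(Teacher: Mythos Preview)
Your proof is correct and follows essentially the same route as the paper's own argument: express the Serre pairing as $\int_C f(z)\,dz\wedge\frac{\debar b}{z}$, rewrite it as $-\int_{C\setminus\{x\}}\debar\bigl(\frac{b f}{z}\,dz\bigr)$, and then apply Stokes on the complement of a small disc to reduce to the Cauchy integral $\int_{|z|=\eps}\frac{f(z)}{z}\,dz=2\pi i\,f(0)$. The only cosmetic difference is that you make the identity $\debar(b\omega)=d(b\omega)$ explicit (via the vanishing of $(2,0)$-forms on a curve), whereas the paper leaves this tacit.
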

\begin{proof}
  \begin{gather*}
    \beta \bigl ( \xi_{\campcoo{z}(x)} \bigr ) = \int_C \beta \cup
    \theta = \int_U f(z) dz \wedge \frac{\debar b}{z} =
    -\int_{U\setminus\{x\}} \debar \biggl ( \frac{b (z)f(z) }{z} dz
    \biggr ) .
  \end{gather*}
  If $\eps>0$ is small enough, $b\equiv 1 $ on $\{|z|\leq \eps\}$.
  Using Stokes and Cauchy theorems we get
  \begin{gather*}
    \beta \bigl ( \xi_{\campcoo{z}(x)} \bigr ) = -\lim_{\eps \to 0}
    \int\limits_{U\cap \{|z| > \eps \}} \debar \biggl ( \frac{b
      (z)f(z) }{z} dz \biggr ) = \lim_{\eps \to 0}
    \int\limits_{|z|=\eps} \frac{f(z) }{z} dz = 2\pi i f(0).
  \end{gather*}
\end{proof}

\begin{say}\label{saymu}
{\bf Gaussian maps.}
We briefly recall the definition of Gaussian maps for curves. Let $N$
and $M$ be line bundles on $C$.  Set $S:=C\times C$ and let $\Delta
\subset S$ be the diagonal. For a non-negative integer $k$ the
\emph{k-th Gaussian} or \emph{Wahl map} associated to these data is
the map given by restriction to the diagonal
\begin{equation*}
  H^0(S,    N\boxtimes M (-k \Delta) ) \stackrel{    \mu^k_{N,M}}
  {\longrightarrow}
  H^0(S, N\boxtimes M (-k \Delta) \restr{\Delta})\cong
  H^0(C, N\otimes M \otimes K_C^{{k}}).
\end{equation*}
We are only interested in the case $N=M$.  In this case we set
$\mu_{k,M}:=\mu^k_{M,M}$.  With the indentification $H^0(S, N
\boxtimes M) \cong H^0(C,N) \otimes H^0(C,M)$ the map $\mu_{0,M}$ is
the multiplication map of global sections
\begin{equation*}
  H^0(C,M)\otimes
  H^0(C,M)\rightarrow H^0(C,M^{ 2}),
\end{equation*}
which obviously vanishes identically on $\wedge^2 H^0(C,M)$.
Consequently $\ker \mu_{0,M}$ $= H^0(S, M\boxtimes M(-\Delta))$
decomposes as $\wedge^2 H^0(C,M)\oplus I_2(M)$, where $I_2(M)$ is the
kernel of $S^2H^0(C,M)\rightarrow H^0(C,M^2)$. Since $\mu_{1,M}$
vanishes on symmetric tensors, one usually writes
\begin{equation}\nonumber
  \mu_{1,M}:\wedge^2H^0(M)\rightarrow H^0(
  K_C\otimes M^{ 2}).
\end{equation}
If $\sigma$ is a local frame for $M$ and $z$ is a local coordinate,
given sections $s_1, s_2 \in H^0(C,M)$ with $s_i = f_i(z) \sigma$, we
have
\begin{gather}
  \label{muformula}
  \mu_{1,M} (s_1\wedge s_2) = (f'_1 f_2 - f'_2f_1) dz \otimes \sigma^{
    2} .
\end{gather}
Consequently, the zero divisor of $\mu_{1,M} (s_1\wedge s_2)$ is twice
the base locus of the pencil $\sx s_1, s_2 \xs $ plus the ramification
divisor of the associated morphism.

Again $H^0(S, M\boxtimes M (-2\Delta))$ decomposes as the sum of
$I_2(M)$ and the kernel of $\mu_{1,M}$. Since $\mu_{2,M}$ vanishes
identically on skew-symmetric tensors, one usually writes
\begin{equation*}
  \mu_{2,M}: I_2(M)\rightarrow H^0(C,M^2\otimes K_C^2).
\end{equation*}
By $\mu_2$ we denote the second gaussian map of the canonical line
bundle $K_C$ on $C$:
\begin{equation*}
  \mu_2:= \mu_{2,K_C}:I_2(K_C)\rightarrow H^0(K_C^4).
\end{equation*}
\end{say}

\begin{say} {\bf The form $\eta_x$ and the second fundamental form.}
  We now recall the definition of $\eta_x$.  Let $C$ be a smooth
  complex projective curve of genus $g\geq 4$. Fix a
  point $x \in C$.  The space $H^0(C, K_C (2x))$ is contained in the
  space of closed 1-forms on $C\setminus \{x\}$. The induced map
  $H^0(C,K_C(2x)) \ra H^1(C \setminus \{x\},\C)$ is injective as soon
  as $g>0$. By the Mayer-Vietoris sequence, the inclusion $C\setminus
  \{x\} \hookrightarrow C$ induces an isomorphism $H^1(C,\C) \cong
  H^1(C\setminus \{x\}, \C)$. Thus we get an injection
  \begin{gather}
    \label{jx}
    j_x : H^0(C,K_C(2x)) \hookrightarrow H^1(C,\C).
  \end{gather}
  $H^{1,0}(C)$ is contained in the image of $j_x$ and $h^0(C,K_C(2x))
  = g+1$, so $ j_x\meno ( H^{0,1}(C))$ is a line.  If $(U,z)$ is a
  chart centred at $x$, there is a unique element $\phi$ in this line
  such that on $U\setminus \{x\}$
  \begin{gather*}
    \phi = \biggl (\frac{1}{z^{2}} + h(z) \biggr ) dz
  \end{gather*}
  with $h\in \OO_C(U)$. (Applying Stokes theorem on $C$ minus a disc
  around $x$ shows that the residue at $x$ vanishes, so there is no
  term in $1/z$.)  Define a linear map
  \begin{equation}
    \nonumber
    \eta_x : T_xC \ra
    H^0(C,K_C(2x))
  \end{equation} by the rule
  \begin{gather*}
    u=\la \campcoo{z}(x) \longmapsto \eta_x(u) : = \la\phi.
  \end{gather*}
  We will often drop $x$ and simply write $\eta_u$ for $\eta_x(u)$.
  An easy computation shows that $\eta_x $ does not depend on the
  choice of the local coordinate.
\end{say}

\begin{teo}
  [\protect{\cite[Thm. 3.1]{cpt}, \cite[Lemma 3.5]{cf1}}]
  \label{copito}
  Let $C$ be a non-hyperelliptic curve of genus $g\geq 4$. Given
  points $x\neq y$ in $ C$ and tangent vectors $u\in T_xC$ and $ v\in
  T_yC$ we have
  \begin{gather*}
    \rho(Q)(\xi_u \odot\xi_v ) %= Q( \seconda_{[C]} (\xi_u, \xi_v) )
    = -4 \pi i \cd \eta_x(u)( v) Q(u,v) ,\\
    \rho(Q)(\xi_u \otimes\xi_u ) %= Q(\seconda_{[C]}(\xi_u ,\xi_u ) )
    = -2\pi i \cd \mu_2 (Q)(u^{\otimes 4}).
  \end{gather*}
\end{teo}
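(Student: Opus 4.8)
The plan is to read off $\rho$ directly from the variation of Hodge structure carried by the family of Jacobians, evaluating it on Dolbeault representatives of Schiffer variations supported in small coordinate discs about $x$ and $y$. Recall that the differential of the period map is the cup product $\cup\,\xi : H^0(C,K_C) \to H^1(C,\OO_C)$, and that $\rho(Q)$ is obtained by pairing $Q\in I_2$ against the component along the conormal bundle of the second-order variation of the period map for the Siegel metric. First I would fix a Hodge-theoretic expression for this conormal second-order term --- either by invoking the Gauss equation for the locally symmetric metric on $A_g$ or by differentiating the cup product along an explicit second-order deformation of $C$ --- and rewrite $\rho(Q)(\xi\odot\xi')$ as a sum of pairings built from the first-order classes $\xi\cup\omega_a$, where $Q=\sum_a \omega_a\cd\omega'_a$.

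The heart of the matter is a local computation that converts these cohomological pairings into residues. Writing $u=\la\campcoo{z}(x)$ and $\omega=g(z)\,dz$ near $x$, the representative $\theta=\frac{\debar b}{z}\campcoo{z}$ gives $\xi_u\cup\omega = \la^2[\,g(z)\tfrac{\debar b}{z}\,]\in H^1(C,\OO_C)$. A $\debar$-Poincar\'e computation, in which the higher Taylor coefficients of $g$ contribute $\debar$-exact terms, identifies this class as $\omega(u)$ times the $(0,1)$-class of $\eta_x(u)$; this is the precise sense in which $\eta_x$ encodes the propagation of the infinitesimal deformation. With this dictionary the two cases separate. On the diagonal all cup products are supported at the single point $x$, so expanding $Q$ to second order there and applying the residue formula of Lemma \ref{cauchy} reproduces exactly the second Gaussian map $\mu_2(Q)$ paired with $u^{\otimes4}$, together with the factor $2\pi i$. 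Off the diagonal the deformation at $x$, transported by $\eta_x(u)$, is felt at $y$: the relevant quadratic differential near $y$ is $\eta_x(u)$ multiplied by the holomorphic factors of $Q$, and a single application of Lemma \ref{cauchy} at $y$ against $\xi_v$ produces $2\pi i\,\eta_x(u)(v)$ times $Q(u,v)$. The extra factor $2$ that distinguishes $-4\pi i$ from $-2\pi i$ is the combinatorial one coming from the symmetrization of the two distinct factors in $\xi_u\odot\xi_v$.

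The main obstacle is the first step: pinning down the correct conormal component of the second-order variation and checking that the non-conormal corrections drop out. Because $A_g$ carries the locally symmetric metric and not a flat connection, one must compare the flat (Gauss--Manin) second derivative with the Levi-Civita covariant derivative; the gain is that their difference is tangential to $M_g$ and hence annihilated by $Q\in I_2$, which kills the multiplication map $S^2H^0(K_C)\to H^0(2K_C)$. Establishing this vanishing cleanly --- together with the independence of the local representatives from the choices of bump function $b$ and coordinate $z$, and the careful accounting of the $2\pi i$ factors from Lemma \ref{cauchy} --- is where the genuine work lies; once it is in place, the residue calculations that yield $\eta_x(u)(v)$ and $\mu_2(Q)$ are routine.
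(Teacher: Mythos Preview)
Your plan is essentially the paper's, and it would work. Two points where the paper's execution is more direct than your sketch.

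First, the ``main obstacle'' you flag is handled without any Gauss equation or second-order deformation. The paper takes $\rho(Q)(w)=m(\nabla_w\tilde Q)$, where $\nabla$ is the connection on $S^2 f_*\om_{\XX/\Delta}$ induced by the $(1,0)$-projection of Gauss--Manin and $\tilde Q(t)\in I_2(K_{C_t})$ extends $Q$. Differentiating the identity $\sum a_{ij}(t)\,\omega_i(t)\omega_j(t)\equiv 0$ once and taking the $(1,0)$-part yields the Hodge--Gaussian formula \eqref{Hodge-Gauss}, $\rho(Q)(w)=-2\sum a_{ij}\omega_j\,\partial h_i$, where $\theta\cdot\omega_i=\alpha_i^{0,1}+\debar h_i$ with $\alpha_i$ harmonic. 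The ``tangential correction'' you anticipate is precisely the term $\sum a'_{ij}(0)\,\omega_i\omega_j$, and it is eliminated by this single differentiation; so $Q\in I_2$ enters here just as you predict, but via a first-order argument rather than a curvature comparison.

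Second, your dictionary $\xi_u\cup\omega=\omega(u)\cdot j_x(\eta_u)$ in $H^{0,1}(C)$ is correct, but it is cohomological and does not by itself produce the \emph{section} $\rho(Q)(\xi_u)\in H^0(2K_C)$: formula \eqref{Hodge-Gauss} involves the actual smooth function $h_i$, not only a class. The paper bridges this by setting $\eta_i:=\partial\bigl(\tfrac{b f_i}{z}-h_i\bigr)$, checking that $\eta_i\in H^0(K_C(2x))$ with $\eta_i=-f_i(0)\,\eta_u$ from the pole and the defining property $j_x(\eta_u)\in H^{0,1}$, and then proving $\sum a_{ij}\omega_i\,\partial h_j=-\sum a_{ij}\omega_i\eta_j$. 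This last identity holds because their sum, $\sum a_{ij}\omega_i\,\partial(\tfrac{bf_j}{z})$, is holomorphic on $C\setminus\{x\}$ and vanishes identically near $x$; for that one needs both $\sum a_{ij}f_if_j=0$ \emph{and} $\sum a_{ij}f_if'_j=0$, i.e.\ the vanishing of $Q$ to second order along the diagonal --- a sharper use of $I_2$ than your sketch makes explicit. After this, Lemma~\ref{cauchy} at $y$ (for $x\neq y$) and the local expression of $\mu_2$ at $x$ (for the diagonal) finish the two cases exactly as you describe.
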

\noindent This theorem is basic to the whole paper.  For the reader's
convenience we recall its proof.

\begin{proof} 
  First of all we take $w\in H^1(C,T_C)$ and we compute $\rho(Q)(w)\in
  H^0(C,2K_C)$ for $Q\in I_2(K_C )$.  We can assume that there is a a
  one dimensional deformation ${\mathcal X} \stackrel{f} \rightarrow
  \Delta$ with $C= f^{-1}(0)$ and $w=\kappa(\frac{\partial}{\partial
    t})$, where $\Delta= \{|t|<1\}$ and $\kappa$ is the
  Kodaira-Spencer map. Take a ${\mathcal C}^{\infty}$ lifting $Y$ of
  the holomorphic vector field $\frac{\partial}{\partial t}$.  So we
  have a ${\mathcal C}^{\infty}$ trivialization $\tau: \Delta \times C
  \rightarrow {\mathcal X}$, $\tau(t,x) =\tau_t(x) := \Phi_{t}(x)$,
  where $\{\Phi_t\}$ is the flow of the vector field $Y$. Then
  $\theta:= \overline{\partial} Y\restr{C}$ is a $\debar$-closed form
  in $A^{0,1}(C,T_C)$ such that $ [\theta]=w\in H^1(C,T_C)$. Denote by
  $C_t$ the fibre of $f$ over $t$ and let $\omega(t)$ be a section of
  the Hodge bundle, i.e.  $\forall t \in \Delta$, $\omega(t) \in
  H^0(K_{C_t})$.  Since $\omega(t)$ is closed, also
  $\tau_t^*(\omega(t))$ is closed, so $\tau_t^{*}(\omega(t))= \omega +
  (\alpha + dh)t +o(t),$ where $\omega := \omega(0)$, $\alpha$ is
  harmonic and $h$ is a ${\mathcal C}^{\infty}$ function.  Denote by
  $\nabla^{GM}$ the flat Gauss-Manin connection on $R^1f_*\C$.  So we
  have $ \nabla^{GM}_{\partial/\partial t}[\omega(t)]\restr{t=0} =
  [\alpha].  $ By Griffiths' results (see
  e.g. \cite[pp. 234ff]{voisin-theorie}) $\theta \cdot \omega =
  \alpha^{0,1} + \overline{\partial}h $ and
  $\kappa(\frac{\partial}{\partial t}) \cdot [\omega] = [
  \alpha^{0,1}]$, where $ \alpha^{0,1}$ is the $(0,1)$ component of
  $\alpha$.  Now assume that $\{\omega_i\}_{i=1,...,g}$ is a basis of
  $H^0(K_C) $. Take a quadric $Q = \sum_{i,j} a_{ij} \omega_i \otimes
  \omega_j \in I_2(K_C)$.  Denote by $\nabla^{1,0}$ the Gauss-Manin
  connection on the Hodge bundle $f_* \om_{\XX/\Delta}$,
  i.e. $\nabla^{1,0} = \pi \nabla^{GM}$, where $\pi$ is the projection
  of $H^1(C_t,\C)$ onto $H^0(C_t,K_{C_t})$.  Then for all $ i$ we have
  $\nabla^{1,0}_{\partial/\partial t}[\omega_i(t)]\restr{t=0} =
  [{\alpha_i}^{1,0}]$.  Denote by $\nabla$ the induced connection on
  $S^2f_*\om_{\XX/\Delta}$.  If $\tilde{Q}(t) = \sum_{i,j} a_{ij}(t)
  \omega_i(t) \otimes \omega_j(t) \in I_2(K_{C_t})$ is a section of
  the conormal bundle such that $\tilde{Q}(0) = Q$, then
  \begin{gather*}
    \rho(Q)(w) = m ( \nabla_{\frac{\partial}{\partial t}}
    \tilde{Q}\restr{t=0}) = \sum_{i,j} a'_{ij}(0) \omega_i \omega_j +
    2 \sum_{i,j} a_{ij} \alpha^{1,0}_i \omega_j .
  \end{gather*}
  Since $\sum_{i,j} a_{ij}(t) \omega_i(t) \omega_j(t) \equiv 0$, also
  its derivative at $t=0$ vanishes, i.e. $2\sum_{i,j} a_{ij}(\alpha_i
  + d h_i) \omega_j + \sum_{i,j} a'_{ij}(0) \omega_i \omega_j \equiv
  0$, and if we take the $(1,0)$ part we have $2\sum_{i,j}
  a_{ij}(\alpha^{1,0}_i + \partial h_i) \omega_j + \sum_{i,j} a'_{ij}
  \omega_i \omega_j \equiv 0,$ so
  \begin{equation}
    \label{Hodge-Gauss}
    \rho(Q)(w) = -2 \sum_{i,j} a_{ij}
    \omega_j \partial h_i.   
  \end{equation}
  This is an instance of a Hodge-Gaussian map, see \cite{cpt} and also
  \cite[\S 4]{pi}.

  Now fix a point $x \in C$ and a chart $(U,z)$ centred at $x$.  Let
  $w$ be the Schiffer variation $\xi_{\campcoo{z}(x)}$ at $x \in C$
  with Dolbeault representative $\theta:= \frac{\debar b}{z}\cd
  \campcoo{z}$, where $b\in \cinf_0(U)$ be a bump function equal to 1
  on a neighbourhood of $x$. Let $\omega_i =f_i(z) dz$ be the local
  expression of $\omega_i$ in $U$.  On $C\setminus \{x\}$ we have
  $\theta \cdot \omega_{i}=\frac{f_{i}}{z}\bar{\partial}b$, so
  $\alpha_{i}^{0,1}+\bar{\partial}h_{i}=
  \bar{\partial}\left(\frac{bf_{i}}{z}\right)$.  Set
  $g_{i}:=\frac{bf_{i}}{z}-h_{i}$.  Then
  $\alpha_{i}^{0,1}=\bar{\partial}g_{i}$.  Define $\eta_{i}:=\partial
  g_{_i}$.

  Now we will show that $ \sum a_{{ij}}\omega_{i}\partial h_{j}=- \sum
  a_{{ij}}\omega_{_i}\eta_{j}.$ In the first place, note that
  $\eta_{i}$ is holomorphic in $C\setminus \{x\}.$ Indeed,
  $\alpha_{i}$ is harmonic, thus
  $\bar{\partial}\eta_{i}=\bar{\partial}\partial g_{i}=
  -\partial\bar{\partial}g_{i}=-\partial\alpha_{i}^{0,1}=0$. Hence,
  $\sum a_{_{ij}}\omega_{i}\partial(\frac{bf_{j}}{z})=\sum
  a_{{ij}}\omega_{i}\partial h_{j}+ \sum a_{{ij}}\omega_{i}\eta_{j}$
  is holomorphic on $C\setminus \{x\}$, because the first term is a
  holomorphic section of $2K_C$ by \eqref{Hodge-Gauss} and the second
  is holomorphic on $C\setminus \{x\}$ since $\eta_j $ is holomorphic.
  In a neighborhood of $x$ where $b\equiv1,$ this expression has the
  form
  \begin{gather*}
    \sum a_{{ij}}f_{i}\frac{\partial}{\partial z}
    \left(\frac{f_{j}}{z}\right)dz^2= \sum
    a_{_{ij}}f_{i}\left(-\frac{f_{j}}{z^2}+
      \frac{{f'_{j}}}{z}\right)dz^2=0,
  \end{gather*}
  because $\sum a_{{ij}}f_{i}f_{j}=\sum a_{{ij}}f_{i}{f'_{j}}=0$.  So
  $\sum a_{_{ij}}\omega_{i}\partial(\frac{bf_{j}}{z})$ is identically
  zero.  Thus we have
  \begin{gather*}
    \rho(Q)(\xi_{\campcoo{z}(x)})= -2 \sum a_{{ij}}\omega_{i}\partial
    h_{j}=2 \sum a_{{ij}}\omega_{i}\eta_{j}.
  \end{gather*}

  Now we claim that $\eta_i = - f_i(x) \eta_x(\frac{\partial}{\partial
    z})$. In fact $\eta_{_i}\in H^0(K_C(2x)),$ since on $U,$ where
  $b\equiv1$, $\eta_{i}$ has the form
  \begin{equation}
    \nonumber
    \eta_{i}=\left(-\frac{f_{i}(x)}{z^2}+\psi_{i}(z)\right)dz%\label{eta}
  \end{equation}
  with $\psi_{i}(z)$ a holomorphic function, hence $\eta_{i}$ is a
  meromorphic form, with a double pole at $x$. By definition,
  $\eta_{i}+\alpha_{i}^{0,1}=\partial
  g_{i}+\bar{\partial}g_{i}=dg_{i},$ so
$$j_x(\eta_{i})=-[\alpha_{i}^{0,1}]\in  j_x(H^0(K_C(2x)))\cap H^{0,1}(C). $$
This proves the claim.

We can assume that the chart $(U,z)$ contains also $y$. Applying Lemma
\ref{cauchy} the first statement immediately follows for
$u=\partial/\partial z (x)$ and $v= \partial/\partial z(y)$.  It is
clear that this is enough. For the second statement it suffices to use
the local expression of $\mu_2$.
\end{proof}

\begin{remark} \label{rem-notazione} We remark that Schiffer
  variations, the forms $\eta_x$, $Q$ and $\mu_2(Q)$ are sections of
  vector bundles, but they become functions as soon as a coordinate
  chart is fixed. Because of this many statements, like the one above,
  are usually stated for simplicity, as if these sections were
  evaluated at points instead of tangent vectors. We will follow this
  notation when it is convenient. In the next section instead it is
  better to stick a more formal notation.
\end{remark}

\section{The second fundamental form as a multiplication map}

\begin{say}
  In this section we show that as $x$ varies on the curve the form
  $\eta_x$ varies holomorphically in an appriopriate sense and gives
  rise to a section of a vector bundle on $C$ and to a corresponding
  section $\teta$ of the line bundle $K_S (2\Delta)$ on $S=C\times
  C$. The two main points are Theorem \ref{prodotto} and the
  invariance of $\teta$ with respect to the action of $\Aut(C)$.
\end{say}
\begin{say}
\label{def-elementary}
  Let $(U,z)$ be a chart centred at $x\in C$. Set
  $u=\campcoo{z}(x)$. It is a classical result that there is a
  harmonic function $f_u \in \cinf(C\setminus \{x\} )$ such that $ f_u = - 1/z
  + g(z)$ on $U \setminus  \{x\}$ for some $g\in \cinf (U)$. This function is
  unique up to an additive constant and is called \emph{elementary
    potential}.  Its existence can be proven for example using the
  (real) Hodge decomposition theorem and the Weyl lemma (see e.g.
  \cite[p. 46-48]{farkas-kra}) or using the Perron method (see
  e.g. \cite[p. 213ff]{lamotke-Riemannsche}).
\end{say}

\begin{lemma} \label{jeta} If $f_u$ is an elementary potential, then
  $\partial f_u = \eta_u $, $\debar f_u $ is smooth on $C$ and
  $j_x(\eta_u) = [-\debar f_u]$.
\end{lemma}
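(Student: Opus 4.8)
The plan is to compute $\partial f_u$ and $\debar f_u$ directly from the local normal form $f_u=-1/z+g(z)$ and to exploit the harmonicity of $f_u$ throughout. The guiding identity is that $df_u=\partial f_u+\debar f_u$ is exact on $C\setminus\{x\}$, being the differential of an honest smooth function there; hence the de Rham classes of $\partial f_u$ and $-\debar f_u$ agree in $H^1(C\setminus\{x\},\C)$, and transporting this equality through the Mayer--Vietoris isomorphism $H^1(C,\C)\cong H^1(C\setminus\{x\},\C)$ will yield all three assertions at once.

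First I would establish that $\partial f_u=\eta_u$. Since $f_u$ is harmonic on $C\setminus\{x\}$ we have $\debar\partial f_u=0$, so $\partial f_u$ is a holomorphic $1$-form there. Writing $f_u=-1/z+g$ on $U\setminus\{x\}$ with $g\in\cinf(U)$ gives $\partial f_u=(1/z^2+\partial_z g)\,dz$; the difference $\partial f_u-dz/z^2$ is holomorphic on $U\setminus\{x\}$ and bounded near $x$, so by Riemann's removable singularity theorem it extends holomorphically across $x$. Thus $\partial f_u\in H^0(C,K_C(2x))$ and on $U\setminus\{x\}$ it has exactly the normal form $(1/z^2+h(z))\,dz$ with $h$ holomorphic. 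To identify it with $\eta_u$ I only need to check that $\partial f_u$ lies in the distinguished line $j_x\meno(H^{0,1}(C))$; this follows from the cohomological computation below, and then the uniqueness of $\phi$ forces $\partial f_u=\phi=\eta_u$.

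Next I would treat $\debar f_u$ and the formula for $j_x(\eta_u)$. On $U\setminus\{x\}$ the term $-1/z$ is holomorphic, so $\debar f_u=\partial_{\bar z}g\,d\bar z$, which extends smoothly across $x$ because $g\in\cinf(U)$; this proves the second assertion, that $\debar f_u$ is smooth on all of $C$. Harmonicity gives $\partial\debar f_u=0$, so $\debar f_u$ is $d$-closed and defines a class in $H^{0,1}(C)\subset H^1(C,\C)$. Finally, exactness of $df_u$ on $C\setminus\{x\}$ gives $[\partial f_u]=-[\debar f_u]$ in $H^1(C\setminus\{x\},\C)$; since $-\debar f_u$ is the restriction of a global closed form on $C$, the class $[-\debar f_u]\in H^1(C,\C)$ restricts to $[\partial f_u]$ on $C\setminus\{x\}$. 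By the definition of $j_x$ this means $j_x(\partial f_u)=[-\debar f_u]$, which lies in $H^{0,1}(C)$. This simultaneously places $\partial f_u$ in the line $j_x\meno(H^{0,1}(C))$, completing the first assertion, and yields $j_x(\eta_u)=[-\debar f_u]$.

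The main obstacle is the bookkeeping with the two cohomology groups: $f_u$ itself does not extend to $C$, so the exactness of $df_u$ holds only on $C\setminus\{x\}$, whereas $\debar f_u$ \emph{does} extend and carries a genuine class on $C$. One must therefore keep careful track of which class lives where and verify that the Mayer--Vietoris isomorphism, being induced by restriction, sends $[-\debar f_u]$ on $C$ to $[-\debar f_u]=[\partial f_u]$ on $C\setminus\{x\}$. Once this matching is pinned down the identification $j_x(\partial f_u)=[-\debar f_u]$ is immediate, and the remainder is the routine local computation sketched above.
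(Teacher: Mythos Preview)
Your proof is correct and follows essentially the same route as the paper's: both compute the local form of $\partial f_u$ to see it lies in $H^0(C,K_C(2x))$ with the right principal part, observe that $\debar f_u$ extends smoothly across $x$, and then use the exactness of $df_u=\partial f_u+\debar f_u$ on $C\setminus\{x\}$ to identify $j_x(\partial f_u)$ with $[-\debar f_u]\in H^{0,1}(C)$, whence $\partial f_u=\eta_u$ by uniqueness. The only cosmetic difference is that the paper writes $\partial f_u=\eta_u+\omega$ with $\omega\in H^0(K_C)$ and then concludes $\omega=0$, whereas you invoke the uniqueness of $\phi$ directly.
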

\begin{proof} The $(1,0)$-form $\partial f_u$ is holomorphic on $C -
  \{x\}$ since $f_u$ is harmonic. Moreover $
  \partial f_u = z^{-2} dz + \partial g$ on $U \setminus  \{x\}$. The form
  $\partial g$ is holomorphic on $U\setminus \{x\}$, but also smooth on
  $U$. Hence it is holomorphic on $U$. This shows that $\partial f_u =
  \eta_u + \om$ for some $\om \in H^0(K_C)$. Set $\alfa : = - \debar
  f_u$.  Then $\alfa$ is smooth on $C$ by the definition of $f_u$.  It
  is closed, since $f_u$ is harmonic and of type $(0,1)$. On $C
  \setminus \{x\}$ we have $ \partial f_u - \alf = df_u$, so $[\partial f_u ] =
  [\alfa] $ in $H^1(C\setminus \{x\})$. Therefore $j_x(\partial f_u ) =
  [\alf]$. Since $[\alf] \in H^{0,1}(C)$, this shows that
  $j_x(\partial f_u) \in H^{0,1}(C)$. Therefore $\om=0$ and $\eta_u
  = \partial f_u$.
\end{proof}

\begin{remark}
  Notice that one could prove the existence of $f_u$ using $\eta_u$
  and the fact that $C\setminus \{x\}$ is Stein.
\end{remark}

\begin{lemma}\label{eta-can}
  If $H^{0,1}(C)$ is identified with $H^0(C,K_C)^*$ using Serre
  duality, then $x \mapsto \im j_x\cap H^{0,1}(C) \in \PP(
  H^{0,1}(C))$ coincides with the canonical map.
\end{lemma}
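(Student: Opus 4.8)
The plan is to transport the line $\im j_x\cap H^{0,1}(C)$ to $H^0(C,K_C)^*$ by Serre duality and identify it with the evaluation functional $\omega\mapsto(\omega/dz)(x)$, which in a local frame is exactly the canonical map. First I would fix a chart $(U,z)$ centred at $x$ and set $u=\campcoo{z}(x)$. Recall that $j_x^{-1}(H^{0,1}(C))$ is a line, so $\im j_x\cap H^{0,1}(C)$ is spanned by $j_x(\eta_u)$. By Lemma \ref{jeta} this class equals $[\alpha]$, where $\alpha:=-\debar f_u$ and $f_u=-1/z+g(z)$, $g\in\cinf(U)$, is the elementary potential; note that $\alpha$ is a smooth $\debar$-closed $(0,1)$-form on all of $C$. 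Under Serre duality $[\alpha]\in H^{0,1}(C)$ becomes the functional $\omega\mapsto\int_C\alpha\wedge\omega$ on $H^0(C,K_C)$, so everything reduces to evaluating this integral.

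The key point is that on $C\setminus\{x\}$ the form $\de f_u=\eta_u$ is a holomorphic $(1,0)$-form, whence $\de f_u\wedge\omega=0$ for type reasons; therefore $\alpha\wedge\omega=-\debar f_u\wedge\omega=-df_u\wedge\omega=-d(f_u\omega)$, using $d\omega=0$. Since $\alpha\wedge\omega$ is smooth across $x$ while $f_u\omega$ has a simple pole there, I would integrate over $C$ minus a small disc $D_\eps=\{|z|\le\eps\}$ and let $\eps\to 0$. By Stokes,
\[
\int_{C\setminus D_\eps}\alpha\wedge\omega=\int_{\partial(C\setminus D_\eps)}(-f_u\omega)=\int_{|z|=\eps}f_u\omega ,
\]
the last circle being positively oriented. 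Writing $\omega=\psi(z)\,dz$ on $U$, the regular part $g\psi\,dz$ contributes $O(\eps)$, while Cauchy's theorem applied to $-\psi(z)/z$ yields, in the limit, $\int_C\alpha\wedge\omega=-2\pi i\,\psi(0)=-2\pi i\,(\omega/dz)(x)$.

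Thus $[\alpha]$ corresponds, up to the nonzero constant $-2\pi i$, to the evaluation functional $\omega\mapsto(\omega/dz)(x)$, whose projective class is precisely the value at $x$ of the canonical map $C\to\PP(H^0(C,K_C)^*)$. Passing to $\PP(H^{0,1}(C))$ removes both the scalar and the dependence on the chart, so $x\mapsto\im j_x\cap H^{0,1}(C)$ coincides with the canonical map.

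The only delicate step is the limit computation, namely the vanishing of the regular boundary term and the interchange of limit and integral; this is entirely parallel to the argument in Lemma \ref{cauchy}, so I do not expect any real obstacle there. The other points to keep straight are bookkeeping rather than substance: the precise Serre duality convention (which only affects the overall scalar and is projectively irrelevant) and the boundary orientation in the application of Stokes.
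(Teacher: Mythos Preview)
Your proof is correct and follows essentially the same argument as the paper: identify the line $\im j_x\cap H^{0,1}(C)$ with the class $[-\debar f_u]$ via Lemma~\ref{jeta}, then compute the Serre pairing with an arbitrary $\omega\in H^0(C,K_C)$ by Stokes on $C\setminus D_\eps$ and Cauchy, obtaining the evaluation functional at $x$. Your version is in fact slightly more explicit about the $2\pi i$ factor and the orientation conventions than the paper's, but the structure is identical.
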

\begin{proof}
  Let $(U,z)$ be a coordinate centred at $x$. Set
 $u=\campcoo{z} (x)$.
  If $\om \in H^0(C,K_C)$, let $\om=\phi(z)dz$ be its local expression
  on $U$.  By Lemma \ref{jeta} $j_x(\eta_u) = -[\debar f_u]$.  Therefore
  \begin{gather}
    \label{eq:eta-can}
    \begin{gathered}
      \int_C \om \wedge j_x(\eta_u) = -\int_{C} \om \wedge \debar f_u
      = \int_{C\setminus \{x\}} d ( f_u\, \om) = \\
      = \lim_{\eps \to 0} \int_{|z|=\eps} \phi(z) \Bigl ( -\frac{1}{z}
      + g(z) \Bigr ) dz = -\phi(0) = - \om (u).
    \end{gathered}
  \end{gather}
\end{proof}

\begin{say}
  Let $S:= C\times C$, let $\Delta$ denote the diagonal and let $p, q:
  S \ra C$ be the projections $p(x,y) =x$, $q(x,y) = y$.  Then $ K_S =
  p^*K_C \otimes q^*K_C$. Consider the line bundle $L:= K_S(2\Delta)$
  on $S$ and set
  \begin{gather*}
    V:=p_*(q^*K_C ( 2\Delta)) \qquad E:=p_* L.
  \end{gather*}
  By the projection formula $E = K_C \otimes V$. Since $ q^*K_C (2
  \Delta)\restr{\{x\}\times C } = q^* K_C (2x) $, we have $H^0(
  p\meno(x) , q^*K_C (2\Delta)) \cong H^0 (C,K_C (2x)) $.  By Grauert
  semicontinuity theorem $V$ is a holomorphic vector bundle on $C$
  with fibre $V_x \cong H^0( C , K_C ( 2x)) $ and the map $x\mapsto
  \eta_x$ is a section of $ E$.  We call this section
  $\eta$. \end{say}
\begin{prop}
  $\eta$ is a holomorphic section of $E$.
\end{prop}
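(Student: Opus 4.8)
The plan is to realize $\eta$ as a composition of holomorphic bundle maps, so that holomorphicity becomes a formal consequence. A priori $\eta$ is only a smooth section of $E = \Hom(T_C,V)= K_C \otimes V$, sending $u\in T_xC$ to $\eta_x(u)\in V_x = H^0(C,K_C(2x))$. I will recover separately the line spanned by the image of $\eta_x$ and the normalization of $\eta_x$, each as a holomorphic datum.

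First I would show that the fibrewise inclusion of closed forms into de Rham cohomology, $j_x : V_x = H^0(C,K_C(2x)) \hookrightarrow H^1(C,\C)$ of \eqref{jx}, assembles into a holomorphic $\OO_C$-linear map $j : V \to \underline{H^1(C,\C)}$ into the trivial bundle with fibre $H^1(C,\C)$. Working locally in $x$, a holomorphic frame of $V$ is a holomorphically varying family $\phi_x$ of forms with a double pole at $x$; after shrinking the base, cycles $\gamma$ generating $H_1(C,\Zeta)$ can be chosen to avoid a neighbourhood of the moving point, and then the periods $x\mapsto \int_\gamma \phi_x$ are holomorphic, being integrals over fixed cycles of a holomorphic family of closed forms. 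Composing with the constant $\C$-linear Hodge projection $\pi^{1,0}: H^1(C,\C) \to H^{1,0}(C) = H^0(C,K_C)$ gives a holomorphic bundle map $\Phi:=\pi^{1,0}\comp j : V \to \underline{H^0(C,K_C)}$. Since $H^{1,0}(C) \subset \im j_x$, each $\Phi_x$ is surjective, so $\Phi$ has constant rank $g$ and $\La := \ker \Phi$ is a holomorphic line subbundle of $V$ with $\La_x = j_x\meno(H^{0,1}(C))$, which by definition of $\eta_x$ contains $\eta_x(u)$ for all $u$.

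Next I would produce the normalization as a second holomorphic map. The leading polar coefficient along the diagonal defines a bundle map $P:V \to T_C$: restricting the tautological sequence $0 \to q^*K_C(\Delta) \to q^*K_C(2\Delta) \to q^*K_C(2\Delta)\restr{\Delta}\to 0$ to $\Delta \cong C$ and using $N_{\Delta/S}\cong T_C$ one identifies $q^*K_C(2\Delta)\restr{\Delta} \cong K_C \otimes T_C^{\otimes 2} \cong T_C$, and $P$ is the $p$-pushforward of the evaluation-along-$\Delta$ map, hence holomorphic. In the coordinate used to define $\eta_x$, an element $\la(z^{-2}+h(z))\,dz$ of $\La_x$ has leading coefficient $\la$, i.e. $P(\eta_x(u)) = u$; thus $\eta_x$ is exactly the unique element of $\La_x$ with this property. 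Restricting, $P\restr{\La}:\La \to T_C$ is fibrewise injective, since $P(\phi)=0$ for $\phi\in\La_x$ forces $\phi\in H^0(K_C(x)) = H^0(K_C)$ (as $h^0(K_C(x))=g$), whence $j_x(\phi)\in H^{1,0}\cap H^{0,1}=0$; being an injective map of line bundles it is an isomorphism.

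It then follows formally that $\eta = \iota\comp (P\restr{\La})\meno$, where $\iota:\La\hookrightarrow V$ is the inclusion. As the composition of the holomorphic isomorphism $(P\restr{\La})\meno : T_C \to \La$ with the holomorphic inclusion $\iota$, this is a holomorphic element of $\Hom(T_C,V) = H^0(C,E)$, which is the assertion. I expect the main obstacle to be the first step, the holomorphicity of $j$: its target $H^1(C,\C)$ is only a topological invariant, so one must genuinely verify that the de Rham class of a family of forms \emph{with moving poles} varies holomorphically, which is what the period argument above provides. If a more hands-on verification is preferred, Lemmas \ref{jeta} and \ref{eta-can} make both the line $\La_x$ and the map $P$ fully explicit.
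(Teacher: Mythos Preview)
Your proof is correct and shares its main step with the paper's: both establish that the fibrewise period map $j:V\to \underline{H^1(C,\C)}$ is a holomorphic bundle map, by integrating a local holomorphic frame of $V$ over fixed $1$-cycles avoiding the moving pole.

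The difference lies in how the conclusion is drawn. The paper argues directly: since $j$ is a holomorphic injection, it suffices to show that $x\mapsto j_x\bigl(\eta(\campcoo{z})\bigr)$ is holomorphic; testing against the basis of $H^1(C,\C)^*$ given by $\int_C(\cdot)\wedge\om_j$ and $\int_C(\cdot)\wedge\overline{\om}_j$, the latter vanish because $j_x(\eta)\in H^{0,1}$, while the former equal the local coefficients $f_j(z)$ of $\om_j$ by the residue computation of Lemma~\ref{eta-can}. You instead package the same information structurally: the constant projection $\pi^{1,0}$ makes $\Lambda=\ker(\pi^{1,0}\!\circ j)$ a holomorphic line subbundle, and the polar-part map $P:V\to T_C$ (coming from restriction to the diagonal) trivializes $\Lambda$, yielding $\eta=(P\restr{\Lambda})^{-1}$. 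Your route is a little more bundle-theoretic and avoids invoking Lemma~\ref{eta-can} explicitly, at the cost of introducing the extra map $P$; the paper's route is more hands-on but shorter. Both are perfectly valid and essentially equivalent in depth once the holomorphicity of $j$ is in hand.
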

\begin{proof}
  Let $W \ra C$ denote the trivial vector bundle with fibre
  $H^1(C,\C)$.  We claim that the injection $j: V \hookrightarrow W$
  defined in \eqref{jx} is holomorphic.  Fix $\alfa \in
  H^0(C,K_C(2x))$ and a smooth singular 1-cycle $c $ on $C$. If $x$
  does not lie in the support of $c$, the integral $ \int_c \alfa $ is
  well-defined.  It does not change if $\alfa$ is replaced by $\alfa +
  df$ with $f\in \cinf (C \setminus \{x\})$. Therefore $ \int_c \alfa
  = \sx [c], j(\alfa)\xs $.  Fix $x_0 \in C$ and choose smooth
  1-cycles $c_1 ,\lds, c_{2g}$, that do not touch $x_0$ and whose
  classes form a basis of $H_1(C, \C)$.  Let $A$ be an open subset of
  $ C$, such that $V$ is trivial over $A$ and $\overline{A}$ does not
  intersect the supports of the cycles $c_i$.  Fix $s\in H^0(A, V)$. To show
  that $j(s)$ is a holomorphic section of $W$ over $A$, it is enough
  to prove that the functions
  \begin{gather*}
    x \longmapsto \sx [c_i], j_x(s(x))\xs = \int_{c_i} s(x)
  \end{gather*}
  are holomorphic on $A$.  Since $V=p_*q^*K_C (2\Delta)$, $s$
  corresponds to a section $s \in H^0(A\times C, q^*K_C (2\Delta))$.
  So $s(x, \cd) \restr{C\setminus A}$ is a holomorphic 1-form on
  $C\setminus A$ depending holomorphically on the parameter $x\in A$
  and its integral over the 1-cycle $c_i$ (which is contained in
  $C\setminus A$) is a holomorphic function of $x$.  Therefore $j(s)$
  is holomorphic. Since $s$ is arbitrary, this proves that $j$ is
  holomorphic, as claimed.  Next fix a chart $(U,z)$ on
  $C$. To show that $\eta $ is holomorphic on $U$ it is enough to
  prove that $\eta({\campcoo{z}}) $ is holomorphic or - by the above -
  that $j(\eta({\campcoo{z}}))$ is a holomorphic function $U\ra
  H^1(C,\C)$.  Fix a basis $\{\om_1, \lds, \om_g\}$ of $H^0(C,K_C)$.
  The functionals $ \int_C (\cd )\wedge \om_j$ and $ \int_C (\cd
  )\wedge \overline{\om}_j$ form a basis of $H^1(C,\C)^*$. Since
  $j(\eta(\campcoo{z})) \in H^{0,1}(C)$ the latter $g$ functionals
  vanish on $j(\eta(\campcoo{z}))$.  Assume that $\om_j(z) = f_j(z)
  dz$ on $U$. By \eqref{eq:eta-can}
  \begin{gather*}
    \int_C j\Big(\eta(\campcoo{z})\Big) \wedge \om_j= f_j(z).
  \end{gather*}
  This proves that $j(\eta(\campcoo{z}))$ is holomorphic.
\end{proof}

\begin{say}
  \label{teta-eta}
  Since $E=p_* L$ there is an isomorphism $ H^0(C,E) \cong H^0(S,L)$
  that associates to $\alfa \in H^0(C,E)$ the section $\hat{\alf} \in
  H^0(S,L)$ such that
  \begin{gather*}
    \alf_x= \hat{\alf} \restr{\{x\}\times C} \in T_x^*C \otimes
    H^0(C,K_C(2x)) = E_x.
  \end{gather*}
  It follows that
  $    \alf_x (u)(v)=$ $\hat{\alfa}\bigl ( (u,0),(0,v)\bigr)$
for $x \neq y$,  $u\in T_xC$ and $v\in T_y C$.

Thus
  there is a well-defined section $\teta \in H^0(S,L)$ corresponding
  to $\eta$ and for $u\in T_xC$ and $v\in T_y C$ with $x\neq y$, we
  have $ \eta_x (u)(v)=\teta(u,v) $.
\end{say}

\begin{lemma}
  \label{sym}
  The form $\teta$ is symmetric, i.e.  $\teta \bigl ((u, 0)
  ,(0,v)\bigr ) = \teta \bigl ((v, 0) ,(0,u)\bigr ) $.
\end{lemma}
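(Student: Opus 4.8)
The plan is to reduce the assertion to a reciprocity for the forms $\eta_x$, and then to prove that reciprocity with the Riemann bilinear relations for differentials of the second kind. By \ref{teta-eta} and bilinearity of $\eta_x$ and of evaluation, the claim $\teta((u,0),(0,v)) = \teta((v,0),(0,u))$, read on the dense open set $S\setminus\Delta$ where $\teta$ is a genuine holomorphic $2$-form, is exactly the scalar identity
\[
  \eta_x(u)(v) = \eta_y(v)(u), \qquad x\neq y .
\]
Since $\teta$ and its pullback under the swap $\sigma(x,y)=(y,x)$ are both holomorphic sections of $K_S(2\Delta)$ (here $\sigma^*K_S\cong K_S$ canonically and $\sigma$ preserves $\Delta$), it suffices to verify this for $x\neq y$. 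Fixing charts $(U,z)$ at $x$ and $(U',w)$ at $y$ and taking $u=\campcoo{z}(x)$, $v=\campcoo{w}(y)$, I set $\psi_1:=\eta_x(u)$, $\psi_2:=\eta_y(v)$, $A:=\eta_x(u)(v)$ and $B:=\eta_y(v)(u)$, so that $A$ is the value at $y$ of $\psi_1$ on $v$ and $B$ the value at $x$ of $\psi_2$ on $u$.

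The two structural facts I would record are the following. First, by the very definition of $\eta_x$ each of $\psi_1\in H^0(K_C(2x))$ and $\psi_2\in H^0(K_C(2y))$ has a double pole with \emph{vanishing residue}, so both are differentials of the second kind, with local primitives $F_1=-1/z+(\text{hol})$ near $x$ and $F_2=-1/w+(\text{hol})$ near $y$. Second, by construction (equivalently by Lemma \ref{jeta}, which writes $\psi_i=\partial f$ for an elementary potential with smooth $\debar f$) the classes $j_x(\psi_1),\,j_y(\psi_2)$ lie in $H^{0,1}(C)$. Choosing a symplectic basis $a_k,b_k$ of $H_1(C,\Zeta)$ avoiding $x,y$ and the normalized basis $\om_1,\dots,\om_g$ of $H^0(C,K_C)$ with $\int_{a_j}\om_k=\delta_{jk}$, $\int_{b_j}\om_k=\tau_{jk}$, the $H^{0,1}$ condition says that $j_x(\psi_1)$ is a $\C$-combination of the $\overline{\om_k}$; since $\int_{a_j}\overline{\om_k}=\delta_{jk}$ and $\int_{b_j}\overline{\om_k}=\overline{\tau_{jk}}$, its periods satisfy $\int_{b_j}\psi_1=\sum_k\overline{\tau_{jk}}\int_{a_k}\psi_1$, and likewise for $\psi_2$.

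I would then apply the bilinear relation for second-kind differentials: cutting $C$ along the $a_k,b_k$ to a polygon on which $F_1$ is single-valued, Stokes together with the residue theorem gives
\[
  \sum_{k=1}^g\Big(\textstyle\int_{a_k}\psi_1\int_{b_k}\psi_2-\int_{b_k}\psi_1\int_{a_k}\psi_2\Big)
  = 2\pi i\sum_{P}\operatorname{Res}_P(F_1\psi_2).
\]
The right-hand side is supported at $x$ (pole of $F_1$, where $\psi_2$ is holomorphic of value $B$ on $u$) and at $y$ (pole of $\psi_2$, where $F_1$ is holomorphic with linear term $A$). A direct local computation, crucially using that $\psi_2$ has no $w^{-1}$ term at $y$, yields $\operatorname{Res}_x(F_1\psi_2)=-B$ and $\operatorname{Res}_y(F_1\psi_2)=A$, so the right-hand side equals $2\pi i(A-B)$. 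For the left-hand side I substitute $\int_{b_k}\psi_i=\sum_l\overline{\tau_{kl}}\int_{a_l}\psi_i$, turning it into $\sum_{k,l}\overline{\tau_{kl}}\big(\int_{a_k}\psi_1\int_{a_l}\psi_2-\int_{a_l}\psi_1\int_{a_k}\psi_2\big)$, a contraction of the symmetric matrix $\overline{\tau}$ against an array antisymmetric in $k,l$; hence it vanishes. Therefore $2\pi i(A-B)=0$, i.e. $A=B$, which is the desired symmetry.

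The main obstacle, and the point I would be most careful about, is the Hodge-theoretic input. A naive Stokes argument removing small disks around $x$ and $y$ only reproduces the tautology $\text{LHS}=2\pi i(A-B)$ and cannot conclude, precisely because the period contributions stay invisible until one cuts along the full homology basis; the symmetry is genuinely forced only by combining that $\psi_1,\psi_2$ are of the second kind (no residue) with the fact that their classes are of type $(0,1)$, so that $b$-periods equal $\overline{\tau}$ times $a$-periods, and then invoking the symmetry of $\tau$. I would double-check the residue signs — in particular that the vanishing residue of $\psi_2$ at $y$ is exactly what makes $\operatorname{Res}_y(F_1\psi_2)$ equal the linear coefficient $A$ of $F_1$ rather than producing a spurious $F_1(y)$ term — and confirm the period identity for $(0,1)$ classes in the chosen normalization.
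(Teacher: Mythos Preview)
Your argument is correct and follows a genuinely different route from the paper. The paper proves the reciprocity $\eta_x(u)(v)=\eta_y(v)(u)$ via a Green-type identity for the elementary potentials of \ref{def-elementary}: since $f_u$ and $f_v$ are harmonic on $C\setminus\{x\}$ and $C\setminus\{y\}$ respectively, the form $f_u(\debar f_v-\partial f_v)-f_v(\debar f_u-\partial f_u)$ is closed on $C\setminus\{x,y\}$, and Stokes on the complement of two small discs gives boundary contributions $4\pi i\,\eta_y(v)(u)$ and $-4\pi i\,\eta_x(u)(v)$. So, contrary to your closing remark, a ``Stokes on $C$ minus discs'' argument does succeed --- the point is that the globally defined harmonic functions $f_u,f_v$ already encode the $H^{0,1}$ condition (Lemma \ref{jeta}), so no period bookkeeping is needed. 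Your approach via the Riemann bilinear relations for second-kind differentials is the classical period-theoretic counterpart: it makes the Hodge input explicit by reducing everything to the symmetry of the period matrix $\tau$ together with the $(0,1)$-type of $j_x(\psi_1)$ and $j_y(\psi_2)$. The paper's proof is shorter and avoids choosing a homology basis or invoking $\tau$; yours has the virtue of isolating precisely which feature of the Hodge structure forces the symmetry.
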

\proof
Fix points $x \neq y$ and tangent vectors $u\in T_xC, y\in T_yC$.
Using the identity $ d \Bigl ( f_u ( \debar f_ v - \partial f_v) - f_v
( \debar f_u - \partial f_u) \Bigr ) = 2 \bigl ( f_u \partial \debar
f_v - f_v \partial \debar f_u \bigr ) = 0 $ and applying Stokes
theorem on $C \setminus  \{|z| < \eps\} \cup \{|w|< \eps\}$ we get
\begin{equation}
  \nonumber
  \begin{gathered}
    0 = \int_{|z|=\eps} \Bigl ( f_u ( \debar f_ v - \partial f_v) -
    f_v ( \debar f_u - \partial f_u) \Bigr )
    +\\
    + \int_{|w|=\eps} \Bigl ( f_u ( \debar f_ v - \partial f_v) - f_v
    ( \debar f_u - \partial f_u) \Bigr )
  \end{gathered}
\end{equation}
(This is just Green formula.)  Let us denote by $A_\eps$ and $B_\eps$
these two integrals.  Observe that $ f_v \partial f_u = \partial (f_u
f_v) - f_u \partial f_v$ and $ f_u \debar f_v = \debar (f_u f_v) - f_v
\debar f_u $. Therefore
\begin{gather*}
  A_\eps=\int_{|z| =\eps} \Bigl ( d(f_u f_v) - 2 f_v \debar f_u - 2
  f_u \partial f_v \Bigr) =-2 \int_{|z|=\eps} \Bigl ( f_v \debar f_u
  +f_u \partial f_v \Bigr) .
\end{gather*}
Since $f_v \debar f_u$ is a smooth form near $x$, we have $ \lim_{\eps
  \to 0} \int_{|z|=\eps} f_v \debar f_u = 0 $. Choose a coordinate
$(U,z)$ centred at $x$ such that $u=\partial /\partial z (x)$ and let
$g(z)$ be as in \ref{def-elementary}.  Near $x$ we have $ \eta_y(v)
= \partial f_ v = h(z) dz$ for some holomorphic function $h$. So
\begin{gather*}
  \lim_{\eps\to 0} A_\eps = -2 \lim_{\eps \to 0} \int_{|z|=\eps}
  f_u \partial f_ v = \\
  =- 2 \lim_{\eps \to 0} \int_{|z|=\eps} \biggl ( -\frac{1}{z} + g(z)
  \biggr ) h(z) dz = 4 \pi i h(0) = 4\pi i\eta_y(v)(u).
\end{gather*}
By the corresponding computation, $\lim_{\eps \to 0} B_\eps $ $=$ $-4 \pi i
\eta_x(u) (v)$, so $ \eta_y(v) (u) = \eta_x(u)(v)$ as desired.  \qed

\begin{say}
  $\Aut(C)$ acts diagonally on $C\times C$ and preserves
  $\Delta$. Therefore the action lifts to $K_S$ and also to
  $L=K_S(2\Delta)$. This yields a representation of $\Aut(C)$ on
  $H^0(S,L)$.  On the other hand, if $\sigma\in \Aut(C)$, then
  $(\sigma\meno)^*$ is a map from $H^0(C, K_C(2x) ) = V_x $ to
  $H^0(C,K_C(2\sigma(x)) )= V_{\sigma(x)}$. Tensoring it with
  $(d\sigma\meno)^* : T_x^*C \ra T_{\sigma(x)}^*C$ we get a map
  $T^*_xC \otimes V_x \ra T_{\sigma(x)}^*C \otimes V_{\sigma(x)}$.
  This yields an action of $\Aut(C)$ on the total space of
  $E=K_C\otimes V$ which covers the action of $\Aut(C)$ on $C$. This
  means that $E$ is an equivariant bundle. In this way we get a
  representation of $\Aut(C)$ on $H^0(C,E)$. The map $\alf \mapsto
  \hat{\alf}$ considered in \ref{teta-eta} is an isomorphism of
  $\Aut(C)$--representations.
\end{say}

\begin{lemma}
  \label{invarho}
  $\eta$ and $\teta $ are invariant with respect to the action of
  $\Aut(C)$.
\end{lemma}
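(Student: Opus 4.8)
The plan is to deduce the invariance of $\teta$ from that of $\eta$, and to obtain the invariance of $\eta$ from the behaviour of the elementary potential of \ref{def-elementary} under biholomorphisms. Since $\alf \mapsto \hat{\alf}$ is an isomorphism of $\Aut(C)$--representations (as recalled just above), it suffices to show that $\eta$ is a fixed vector of $H^0(C,E)$; then $\teta=\hat\eta$ is automatically fixed in $H^0(S,L)$. So I concentrate on $\eta$.

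First I would make explicit what invariance of a section of $E=K_C\otimes V$ means. Unwinding the equivariant structure described above, a section $s\in H^0(C,E)$ is $\Aut(C)$--invariant exactly when
\begin{gather*}
  s_{\sigma(x)}\bigl (d\sigma(u)\bigr ) = (\sigma\meno)^*\bigl ( s_x(u)\bigr )
\end{gather*}
for every $\sigma\in\Aut(C)$, every $x\in C$ and every $u\in T_xC$, where $(\sigma\meno)^*\colon H^0(C,K_C(2x))\ra H^0(C,K_C(2\sigma(x)))$ is pullback, which carries the double pole from $x$ to $\sigma(x)$. So the target identity is $\eta_{\sigma(x)}(d\sigma(u))=(\sigma\meno)^*\eta_x(u)$.

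To establish it, fix $x$, a chart $(U,z)$ with $u=\campcoo{z}(x)$, and the elementary potential $f_u=-1/z+g(z)$, harmonic on $C\setminus\{x\}$. The key claim is that $f_u\comp\sigma\meno$ is an elementary potential at $\sigma(x)$ for the vector $d\sigma(u)$. Indeed $\sigma\meno$ is biholomorphic, so $\partial\debar(f_u\comp\sigma\meno)=(\sigma\meno)^*\partial\debar f_u=0$ and $f_u\comp\sigma\meno$ is harmonic on $C\setminus\{\sigma(x)\}$; moreover in the coordinate $w:=z\comp\sigma\meno$ centred at $\sigma(x)$, for which $d\sigma(u)=\campcoo{w}(\sigma(x))$, one reads off $f_u\comp\sigma\meno=-1/w+g(w)$, which is precisely the normal form of \ref{def-elementary}. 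By uniqueness up to an additive constant, $f_u\comp\sigma\meno$ and $f_{d\sigma(u)}$ differ by a constant. Since $\partial$ commutes with pullback along the holomorphic map $\sigma\meno$ and kills constants, Lemma \ref{jeta} then gives
\begin{gather*}
  \eta_{\sigma(x)}(d\sigma(u))=\partial f_{d\sigma(u)}=\partial(f_u\comp\sigma\meno)=(\sigma\meno)^*\partial f_u=(\sigma\meno)^*\eta_x(u),
\end{gather*}
as required, so $\eta$ and hence $\teta$ are $\Aut(C)$--invariant.

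The step I expect to be most delicate is purely a matter of bookkeeping rather than analysis: matching the two pullback conventions built into the equivariant structure of $E$ with the direction in which $\sigma$ moves the pole of $\eta_x(u)$, and checking that the coordinate $w=z\comp\sigma\meno$ really exhibits $d\sigma(u)$ as $\campcoo{w}$. Once these identifications are fixed correctly, no genuine analytic issue remains, since harmonicity and the principal part $-1/z$ are manifestly preserved by biholomorphisms, and the additive-constant ambiguity disappears upon applying $\partial$.
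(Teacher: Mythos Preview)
Your proof is correct. It differs from the paper's argument in the characterization of $\eta_x(u)$ that it invokes. The paper works directly with the defining property of $\eta_x(u)$: it checks that $\tau^*\eta_x(u)$ has the prescribed principal part $w^{-2}\,dw$ at $\sigma(x)=\tau\meno(x)$ in the transported chart $w=z\comp\sigma\meno$, and then uses the identity $j_y\tau^*=\tau^*j_x$ together with the fact that $\tau^*$ preserves $H^{0,1}(C)$ to conclude that $j_y(\tau^*\eta_x(u))\in H^{0,1}(C)$, so $\tau^*\eta_x(u)=\eta_y(d\sigma(u))$ by the uniqueness in the definition of $\eta$. You instead route through Lemma~\ref{jeta} and the elementary potential: pulling back $f_u$ along $\sigma\meno$ gives a harmonic function with the right singular behaviour at $\sigma(x)$, hence an elementary potential for $d\sigma(u)$ up to a constant, and applying $\partial$ yields the identity. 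Both arguments hinge on the same coordinate observation that $d\sigma(u)=\campcoo{w}(\sigma(x))$; the paper's version stays closer to the Hodge-theoretic definition of $\eta$, while yours is more hands-on and avoids mentioning $j_x$ or $H^{0,1}$ explicitly, at the cost of invoking Lemma~\ref{jeta} (whose proof, of course, already contains that Hodge-theoretic step).
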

\begin{proof}
  By the above it is enough to show that $\eta$ is invariant.  For
  $\tau \in \Aut(C)$ we wish to prove that $(d\tau\meno)^*\otimes
  (\tau\meno)^* (\eta_y)= \eta_{\tau(y)}$ for any $y\in C$.  For
  simplicity, set $\sigma : = \tau\meno$, $x:=\tau(y)$. So we wish to
  prove that $\sigma^* (\eta_y({d\sigma(u))}) = \eta_x(u)$
  for any $u\in T_xC$. By continuity it is enough to prove this for $x
  $ such that $ \sigma(x) \neq x$. Choose a coordinate patch $(U,z)$
  centred at $x$, such that $U\cap \sigma(U) \neq \vacuo$ and
  $u=\campcoo{z}\vert_x$. Then $(\sigma(U), w:=z\circ \sigma\meno)$ is
  a coordinate system centred at $\sigma(x)$ and
  $\campcoo{w}(\sigma(x)) = d\sigma(u)$. Assume that
  \begin{gather*}
    \eta_x(u) = \biggl ( \frac{1}{z^2} + h(z) \biggr ) dz
  \end{gather*}
  on $U$. Then
  \begin{gather*}
    \tau^*\eta_x(u) = \biggl( \frac{1}{w^2} + h(w) \biggr )
    dw
  \end{gather*}
  on $\sigma(U)$.  Moreover $ j_y \tau^* = \tau^*j_x$. So $ j_y
  \tau^*( \eta_x(u)) =\tau^*j_x(\eta_x(u ))% \in\tau^* \eta
  $. Since $j_x(\eta_x(u) ) \in H^{0,1}(C)$, also $ j_y \tau^*(
  \eta_x(u)) \in H^{0,1}(C)$. Hence $ \tau^*\eta_x(u) = \eta
  _y(d\sigma(u))$ as desired.
\end{proof}

\begin{say}
  If we identify $H^0(C, K_C) \otimes H^0(C,K_C)$ with $H^0(S,K_S) $,
  then $I_2 $ becomes a subspace of $H^0(S, K_S( - \Delta))$.  Since
  elements of $I_2$ are symmetric, they are in fact in
  $H^0(S,K_S(-2\Delta))$. So if $Q\in I_2$ the product section $Q\cd
  \teta$ lies in $H^0(S,2K_S) \cong H^0(C, 2K_C) \otimes H^0(C,
  2K_C)$.
\end{say}
\begin{teo}
  \label{prodotto} With the above identifications, if $C$ is
  non-hyperelliptic and of genus $g\geq 4$, then $\rho : I_2 \ra
  S^2H^0(C,2K_C)$ is the restriction to $I_2$ of the multiplication
  map
  \begin{gather*}
    H^0(S, K_S(-2\Delta)) \lra H^0(S,2K_S) \qquad Q \mapsto Q \cd
    \teta .
  \end{gather*}
\end{teo}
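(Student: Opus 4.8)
The plan is to show that the two elements $\rho(Q)$ and $Q\cd\teta$ of $S^2H^0(C,2K_C)$ coincide by pairing them against symmetric products of Schiffer variations. First I would record two preliminary facts about the right hand side. Since $Q\in I_2$ vanishes to order two along $\Delta$ while $\teta$ has a pole of order two there, the product $Q\cd\teta$ has no pole and is a genuine holomorphic section of $2K_S$; and since $Q$ is symmetric and $\teta$ is symmetric by Lemma~\ref{sym}, the product is invariant under the involution $(x,y)\mapsto(y,x)$ of $S$. Hence $Q\cd\teta$ lies in $S^2H^0(C,2K_C)\subset H^0(C,2K_C)\otimes H^0(C,2K_C)=H^0(S,2K_S)$, so the multiplication map indeed lands in the target of $\rho$.

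Next I would reduce the equality to a pointwise statement. Via Serre duality, in the normalization made explicit by Lemma~\ref{cauchy}, the space $S^2H^0(C,2K_C)$ is the dual of $S^2H^1(C,T_C)$, and since Schiffer variations generate $H^1(C,T_C)$ the symmetric products $\xi_u\odot\xi_v$ span $S^2H^1(C,T_C)$. Thus it suffices to prove that $\rho(Q)$ and $Q\cd\teta$ take the same value on every $\xi_u\odot\xi_v$. I would first handle the generic case $x\neq y$ and then deduce the diagonal case $x=y$ by continuity: both pairings are continuous functions of $(x,u)$ and $(y,v)$ on the total space of $TC$, they agree on the dense open locus where $x\neq y$, hence agree everywhere, so that the products $\xi_u\odot\xi_v$ with $x=y$ are controlled as well. (As a cross-check the diagonal case also follows from the $\mu_2$ formula of Theorem~\ref{copito}.)

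For the generic evaluation I would fix $x\neq y$ and a product chart, writing $Q=q(z,w)\,dz\otimes dw$ and $\teta=H(z,w)\,dz\otimes dw$ away from $\Delta$. The definition of $\teta$ recalled in \ref{teta-eta} gives $H(x,y)=\teta(u,v)=\eta_x(u)(v)$, while $q(x,y)=Q(u,v)$. The product is then $qH\,(dz)^2(dw)^2$, and applying Lemma~\ref{cauchy} separately in each variable shows that the evaluation of $Q\cd\teta$ on $\xi_u\odot\xi_v$ equals, up to the universal nonzero constant fixed by the chosen Serre duality and K\"unneth identifications, the quantity $\eta_x(u)(v)\,Q(u,v)$. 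By Theorem~\ref{copito} this is precisely $\rho(Q)(\xi_u\odot\xi_v)$, so the two forms agree on all off-diagonal Schiffer pairs, and hence as elements of $S^2H^0(C,2K_C)$.

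The residue computation in the last step is routine; the point that requires care is the identification $H(x,y)=\eta_x(u)(v)$, which is exactly the content of \ref{teta-eta} and expresses that the fibrewise forms $\eta_x$ glue into the single section $\teta$ — this is the reason $\teta$ was constructed in the first place. The only genuine subtlety beyond this is the passage from off-diagonal values to an equality of tensors, i.e.\ the spanning and continuity argument guaranteeing that agreement on $\xi_u\odot\xi_v$ with $x\neq y$ forces equality of the two elements of $S^2H^0(C,2K_C)$.
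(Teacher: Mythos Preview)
Your proof is correct and follows essentially the same approach as the paper: evaluate both $\rho(Q)$ and $Q\cdot\teta$ on off-diagonal Schiffer pairs $\xi_u\odot\xi_v$ via Lemma~\ref{cauchy} and Theorem~\ref{copito}, then use that these span $S^2H^1(C,T_C)$. The only minor difference is that you pass to the diagonal by continuity, whereas the paper simply observes that one can choose a basis of $S^2H^1(C,T_C)$ consisting entirely of off-diagonal symmetric Schiffer products, so the diagonal case need not be addressed at all.
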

\begin{proof}
  The identification $ H^0(C, 2K_C) \otimes H^0(C, 2K_C) \cong H^0(S,
  2K_S) $ is compatible with Lemma \ref{cauchy}, i.e.  for $\alf \in
  H^0(S,2K_S)$, $u\in T_xC$ and $v\in T_yC$ we have $ \sx \alf, \xi_u
  \otimes \xi_v\xs = \alfa\bigl((u,0),(0, v)\bigr)$.  If $x\neq y$, by
  Theorem \ref{copito}
  \begin{gather*}
    \rho(Q) ( \xi_u\odot \xi_v) = Q(u, v) \eta_x(u)(v) = \\
     = (Q\cd \teta) 
\bigl ((u,0), (0,v)\bigr )    =
(Q\cd \teta) \, ( \xi_u \odot \xi_v ).
  \end{gather*}
  (In the last identity we use the fact that both $Q$ and $\teta$ are
  symmetric).  So $\rho(Q) - Q\cd \teta$ vanishes on tensors of the
  form $\xi_u\odot \xi_v$ with $x\neq y$. Clearly we can choose a
  basis of $S^2 H^1(C,T_C) $ formed by such elements.
\end{proof}

\begin{say}
  Since the second fundamental form is symmetric, using this theorem
  we get another proof of Lemma \ref{sym}.
\end{say}

\section{Totally geodesic submanifolds and gonality}

\begin{say}
  In this section we will give an upper bound for the dimension of a
  germ of a totally geodesic submanifold contained in the Jacobian
  locus.
\end{say}
\begin{teo}
  \label{rank}
  Assume that $C$ is a $k$-gonal curve of genus $g$, with $g\geq 4$
  and $k \geq 3$. Then there exists a quadric $Q \in I_2(K_C)$ such
  that $\operatorname{rank}\rho(Q) > 2g-2k$.
\end{teo}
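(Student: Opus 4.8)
The plan is to translate the rank condition into a statement about linear independence of quadratic differentials, and then to produce enough of them by exploiting the gonal pencil.

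First I would reformulate $\operatorname{rank}\rho(Q)$ using Theorem \ref{copito}. Viewing $\rho(Q)\in S^2H^0(C,2K_C)$ as a symmetric bilinear form on $H^1(C,T_C)=H^0(C,2K_C)^*$, its rank is the rank of the induced map $H^1(C,T_C)\ra H^0(C,2K_C)$, $\beta\mapsto \rho(Q)(\beta,\cd)$. Since Schiffer variations span $H^1(C,T_C)$, the image is spanned by the images of the $\xi_u$, and by Theorem \ref{copito} the image of $\xi_u$ (with $u\in T_xC$) is, up to a nonzero constant, the quadratic differential $Q(x,\cd)\cd\eta_x(u)\in H^0(C,2K_C)$, the factors $\mathcal{O}(-2x)$ of $Q(x,\cd)$ and $\mathcal{O}(2x)$ of $\eta_x$ cancelling. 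Hence
\begin{gather*}
  \operatorname{rank}\rho(Q)=\dim_\C\,\spam\{\,Q(x,\cd)\cd\eta_x : x\in C\,\}\subseteq H^0(C,2K_C),
\end{gather*}
and it suffices to produce $Q\in I_2$ for which these sections span more than $2g-2k$ dimensions, i.e. for which the radical of $\rho(Q)$ has dimension below $(3g-3)-(2g-2k)=g+2k-3$. Concretely, picking distinct points $x_1,\lds,x_N$ this amounts to making the $N\times N$ Gram matrix with off-diagonal entries $Q(u_i,u_j)\,\eta_{x_i}(u_i)(u_j)$ and diagonal entries $\mu_2(Q)(u_i^{\otimes 4})$ nonsingular, for $N=2g-2k+1$.

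Next I would bring in the $k$-gonality. Let $A$ be a $g^1_k$ with associated degree-$k$ map $\phi:C\ra\PP^1$; recall $h^0(K_C-A)=g-k+1$, so the polar forms $Q(x,\cd)$ are constrained by the presence of $A$, and it is precisely this constraint that should yield the number $2g-2k=2(h^0(K_C-A)-1)$. I would take $Q$ of maximal rank as a quadric, adapted to $A$, so that the polar forms sweep out as large a subspace of $H^0(C,K_C)$ as $A$ allows, and I would choose the points $x_i$ in general position: away from the ramification of $\phi$, from the relevant base loci, and from the zeros of the quartic differential $\mu_2(Q)\in H^0(C,4K_C)$. With this last choice all diagonal entries $\mu_2(Q)(x_i)$ are nonzero, and since $\teta$ has a genuine double pole along $\Delta$ the values $\eta_{x_i}(x_j)$ are finite and generically nonzero for $i\neq j$.

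The main obstacle is the independence itself: showing that the Gram determinant is not identically zero on $C^N$ for $N=2g-2k+1$, equivalently that $\spam\{Q(x,\cd)\cd\eta_x\}$ is genuinely large. Because the determinant is analytic in the points, a single favourable configuration suffices, so the real content is a lower bound for the span. The qualitative handle is clear: a Schiffer variation $\xi_u$ at $x$ lies in the radical of $\rho(Q)$ only if $Q(x,\cd)\cd\eta_x\equiv 0$, hence (as $\eta_x\neq 0$) only if $x$ lies in the vertex of $Q$, and the $k$-gonal geometry limits both the size of this degeneracy locus and the forced linear relations among the polar forms $Q(x,\cd)$. Converting this into the sharp statement $\dim\spam>2g-2k$ — that is, guaranteeing simultaneous nonvanishing and independence for as many as $2g-2k+1$ points, and verifying that the deficiency is exactly governed by the $h^0(K_C-A)$-type quantity attached to the pencil — is the delicate step, and I expect it to be the hardest part of the argument.
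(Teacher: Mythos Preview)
Your reformulation of $\operatorname{rank}\rho(Q)$ via the Gram matrix on Schiffer variations is correct, and you have correctly located the crux of the argument. But the proposal stops precisely where the actual work begins: you never show that the Gram determinant is nonzero for some configuration of $2g-2k+1$ points. Choosing $Q$ of ``maximal rank'' and points ``in general position'' gives you nonzero diagonal entries, but the off-diagonal entries $Q(u_i,u_j)\,\eta_{x_i}(u_i)(u_j)$ involve the values of $\eta$, over which you have essentially no control. There is no mechanism in your outline that prevents the determinant from vanishing, and the qualitative remarks about the vertex of $Q$ and $h^0(K_C-A)$ do not convert into a rank bound. You yourself flag this step as ``the delicate step'' and ``the hardest part'', but you have not proposed how to carry it out; as written this is a genuine gap.

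The paper's proof avoids this difficulty altogether by forcing the Gram matrix to be \emph{diagonal}. One does not take $Q$ of large rank; one takes a very specific rank-$4$ quadric. Let $F$ be the $g^1_k$ with basis $\{x,y\}$ of $H^0(F)$, set $M=K_C-F$, choose a pencil $\langle t_1,t_2\rangle\subset H^0(M)$ with base locus $B$, and put
\[
  Q \;=\; xt_1\odot yt_2 \;-\; xt_2\odot yt_1 \;\in\; I_2(K_C).
\]
Now choose the points $P_1,\lds,P_d$ to be a \emph{generic fibre of the map $\psi$ associated to $\langle t_1,t_2\rangle$}, so that (after a coordinate change) $t_1(P_i)=0$ for all $i$. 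Then $Q(P_i,P_j)=0$ for \emph{all} $i,j$, and by Theorem~\ref{copito} every off-diagonal entry of the Gram matrix vanishes---the values of $\eta$ become irrelevant. The diagonal entries are $\mu_2(Q)(P_i)$, and for this particular $Q$ one has the factorisation $\mu_2(Q)=\mu_{1,F}(x\wedge y)\cdot\mu_{1,M}(t_1\wedge t_2)$, which is nonzero at $P_i$ as soon as $P_i$ avoids the ramification of both pencils and the base locus $B$. A Riemann--Roch computation shows that the $\xi_{P_i}$ are linearly independent, so $\operatorname{rank}\rho(Q)\geq d=2g-2-k-\deg B$, and Clifford gives $\deg B<k-2$, hence $d>2g-2k$.

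The key idea you are missing is thus: take the points in a fibre of the pencil \emph{complementary} to the gonal one, so that $Q$ kills all the cross terms and only $\mu_2(Q)$ matters.
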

\proof Here we follow the simplified notation mentioned in
\ref{rem-notazione}. So we understand that a local coordinate has been
fixed at the relevant points and we write $\xi_P$ for a Schiffer
variation at $P$.  Let $F$ be a line bundle on $C$ such that $|F|$ is
a $g^1_k$ and choose a basis $\{x,y\}$ of $H^0(F)$.  Set $M = K_C -F$
and denote by $B$ the base locus of $|M|$. By Clifford theorem
$\deg(B) < k-2$.  Assume that $\langle t_1,t_{2}\rangle$ is a pencil
in $H^0(M)$, with base locus $B$.  We can write $t_i = t'_i s$ for a
section $s\in H^0(C,\OO_C(B)) $ with $\operatorname{div} (s) = B$.
Then $\sx t'_1, t'_2\xs $ is a base point free pencil in $|M-B|$.  Let
$\psi : C \ra \PP^1$ be the morphism induced by this pencil.

Now consider the rank 4 quadric $Q:= xt_1 \odot yt_2 - xt_2 \odot
yt_1$. Clearly $Q \in I_2(K_C)$. Set $d:= \deg(M-B) = 2g-2-k-
\deg(B)$.  We need the following fact: if
  $\{P_1,...,P_d\}$ is a fibre of the morphism $\psi$ over a regular
value, then the Schiffer variations $\xi_{P_1},...,
\xi_{P_d}$ are linearly independent in $H^1(C,T_C)$.  In fact, denote
by $W:= \langle \xi_{P_1},..., \xi_{P_d}\rangle$. We want to show that
the annihilator $\Ann(W)$ of $W$ has dimension $3g-3-d$. By lemma
\ref{cauchy}, $\Ann(W)= \{ \alpha \in H^0(C,2K_C) \ | \ \alpha(P_i)
=0,\ i=1,...,d \}$, hence by Riemann-Roch $\dim\Ann(W) =
h^0(2K_C-P_1-\cdots -P_d) = h^0(2K_C-M+B) = g-1+k + \deg(B) = 3g-3
-d$.  The claim is proven.  Next denote by $\phi$ the morphism induced
by the pencil $|F|$ and consider the set $E:= \psi(\Crit(\phi) \cup
\Crit(\psi) \cup B)$ where $\Crit(\phi) $ (resp.  $\Crit(\psi) $)
denote the set of critical points of $\phi$ (resp. $\psi$).  Let $z
\in {\PP}^1 \setminus E$ and let
$\{P_1,\ldots,P_d\}$ be the fibre of
$\psi$ over $z$. By changing coordinates on $\PP^1$ we can assume $z =
[0,1]$, i.e. $t'_1(P_i) = 0$ for $i =1,\ldots d$. Then clearly
$t_1(P_i) = 0$, so $Q(P_i,P_j) = 0$ for all $i,j$.  Applying Theorem
\ref{copito}, one immediately obtains that the restriction of
$\rho(Q)$ to the subspace $W$ is represented in the basis
$\{\xi_{P_1},..., \xi_{P_d}\}$ by a diagonal matrix with entries $\pi
i \cd \mu_2(Q)(P_i)$ on the diagonal.  For a rank 4 quadric the second
Gaussian map can be computed as follows: $ \mu_2(Q) = \mu_{1,F}(x
\wedge y)\mu_{1,M}(t_1 \wedge t_2)$ see \cite[Lemma 2.2]{cf2}.  Now
$\mu_{1,F}(x\wedge y) (P_i) \neq 0$, because $P_i \not \in
\Crit(\phi)$ by the choice of $z$, see \eqref{muformula}.  Moreover
$P_i \not \in B$.  On $C\setminus B$ the morphism $\psi$ coincides
with the map associated to $\sx t_1, t_2\xs$. Since $P_i \not \in
\Crit(\psi)$, it is not a critical point for the latter map. Therefore
also $\mu_{1,M}(t_1\wedge t_2) (P_i) \neq 0$ see \eqref{muformula}.
Thus $\mu_2(Q)(P_i) = \mu_{1,F}(x \wedge y)(P_i)\mu_{1,M}(t_1 \wedge
t_2)(P_i) \neq 0$ for every $i=1,\lds, d$.  This shows that in the
basis $\{\xi_{P_1},..., \xi_{P_d}\}$ the quadric $\rho(Q)\restr{W}$ is
represented by a diagonal matrix with non-zero diagonal entries.  So
$\rho(Q)$ has rank at least $d = 2g-2-k-\deg B > 2g-2k$.\qed

\begin{teo}
  \label{stima1}
  Assume that $C$ is a $k$-gonal curve of genus $g$ with $g\geq 4$ and
  $k\geq 3$. Let $Y$ be a germ of a totally geodesic submanifold of
  $A_g$ which is contained in the jacobian locus and passes through
  $[C]$. Then $\dim Y \leq 2g+k - 4$.
\end{teo}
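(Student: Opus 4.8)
The plan is to convert total geodesy into a vanishing statement for the second fundamental form, and then to play this off against the rank estimate of Theorem \ref{rank} by means of a standard linear-algebra bound on totally isotropic subspaces of a quadratic form.

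First I would set $W := T_{[C]}Y \subseteq T_{[C]}M_g = H^1(C,T_C)$, viewing $Y$ as a germ of submanifold of $M_g$; this is legitimate because $k\geq 3$ forces $C$ to be non-hyperelliptic, so $j$ is an immersion near $[C]$ and $Y$ lifts to a germ in $M_g$ tangent to $W$. The key geometric input is that a totally geodesic submanifold of $A_g$ lying inside the Jacobian locus does not feel the bending of $M_g$ inside $A_g$. Concretely, writing $\nabla$ for the Levi-Civita connection of the Siegel metric and taking $X,Z$ tangent to $Y$, the normal-to-$M_g$ component of $\nabla_X Z$ is obtained from its normal-to-$Y$ component by a further orthogonal projection, since $TY \subseteq TM_g$ gives $TY^\perp \supseteq TM_g^\perp$. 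As $Y$ is totally geodesic in $A_g$, its second fundamental form in $A_g$ vanishes, and hence so does the component along $N$; that is, $\seconda$ vanishes on $S^2 W$. Dualizing, and using that $\rho$ is the dual of $\seconda$ with respect to the pairing between $N$ and $I_2$, this reads $\rho(Q)(w_1\odot w_2)=0$ for every $Q\in I_2$ and all $w_1,w_2\in W$. Via the Serre-duality identification $H^1(C,T_C)\cong H^0(C,2K_C)^*$, it says precisely that $W$ is a totally isotropic subspace for the quadratic form $\rho(Q)$ on $H^1(C,T_C)$, simultaneously for all $Q\in I_2$.

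Next I would invoke Theorem \ref{rank} to select a single quadric $Q_0\in I_2$ with $\rank\rho(Q_0) > 2g-2k$, i.e. $\rank\rho(Q_0)\geq 2g-2k+1$, and then bound the dimension of a totally isotropic subspace of this one form. For this I would use the elementary fact that if $b$ is a symmetric bilinear form of rank $r$ on a complex vector space of dimension $n$, then any subspace on which $b$ vanishes identically has dimension at most $n-\lceil r/2\rceil$: passing to the quotient by the radical produces a nondegenerate form of rank $r$, whose maximal isotropic subspaces have dimension $\lfloor r/2\rfloor$, and one adds back the $(n-r)$-dimensional radical. Applying this with $n=\dim H^1(C,T_C)=3g-3$ and $r\geq 2g-2k+1$ yields
\[
  \dim Y = \dim W \leq (3g-3) - \left\lceil \tfrac{2g-2k+1}{2}\right\rceil = (3g-3)-(g-k+1) = 2g+k-4 ,
\]
which is the asserted bound; a larger rank only sharpens the estimate, so it suffices to use the value $2g-2k+1$ supplied by Theorem \ref{rank}.

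The step I expect to be the main obstacle is the first one, namely making rigorous the passage from ``$Y$ totally geodesic in $A_g$'' to ``$\seconda|_{S^2 W}=0$''. One must check that $Y$, a priori only a submanifold of $A_g$ contained in the image $j(M_g)$, genuinely lifts to a germ of submanifold of $M_g$ tangent to $W$ (this relies on the immersivity of $j$ off the hyperelliptic locus), and that the orthogonal-projection comparison of the two second fundamental forms is valid in the orbifold/stack setting in which $M_g$ and $A_g$ are being treated. Once the isotropy condition is established, the remainder is purely formal linear algebra.
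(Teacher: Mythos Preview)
Your proposal is correct and follows essentially the same approach as the paper: invoke Theorem \ref{rank} to obtain a quadric $Q$ with $\rank\rho(Q)\geq 2g-2k+1$, observe that total geodesy of $Y$ forces $T_{[C]}Y$ to be isotropic for $\rho(Q)$, and bound the dimension of an isotropic subspace. The paper's proof is terser---it writes the bound as $3g-3-\tfrac{2g-2k+1}{2}=2g+k-\tfrac{7}{2}$ and leaves the rounding implicit, whereas you use $\lceil r/2\rceil$ to land directly on the integer $2g+k-4$---but the argument is the same.
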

\proof By Theorem \ref{rank} we know that there exists a quadric $Q
\in I_2$ such that the rank of $\rho(Q)$ is at least $2g-2k+1$. By
assumption for any $v \in T_{[C]}Y$ we must have that $\rho(Q)(v \odot
v) = 0$, so $v$ is isotropic for $\rho(Q)$, hence
\begin{gather*}
  \dim T_{[C]} Y \leq 3g-3 - \frac{(2g-2k+1) }{2}= 2g+ k - \frac{7}{2}
  .
\end{gather*}
\qed
\begin{remark}
  In Theorem \ref{rank} if $|M|$ is base point free -- this happens in
  particular if $C$ is generic in the locus of $k$-gonal curves -- the
  above proof shows that $ \rank \rho(Q) \geq 2g-2-k$.  So if $Y$ is a
  germ of a totally geodesic submanifold of $A_g$ contained in the
  jacobian locus and passing through a generic $k$-gonal curve, then
  $\dim Y \leq 2g-2 + {k}/{2}$.
\end{remark}

\begin{teo}
  \label{stima2}
  If $g\geq 4$ and $Y$ is a germ of a totally geodesic submanifold of
  $A_g$ contained in the jacobian locus, then $\dim Y \leq
  \frac{5}{2}(g-1)$.
\end{teo}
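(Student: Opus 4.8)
The plan is to combine Theorem~\ref{stima1} with the classical upper bound on the gonality of a curve. By the existence theorem for special divisors in Brill--Noether theory, every smooth curve $C$ of genus $g$ carries a $g^1_k$ with $k=\lfloor (g+3)/2\rfloor$: indeed the Brill--Noether number $\rho(g,1,k)=2k-g-2$ is non-negative for this value of $k$, so $W^1_k(C)\neq\vacuo$ for every $C$ (see e.g. \cite{acg}). Hence $\operatorname{gon}(C)\leq \lfloor (g+3)/2\rfloor$ for every $C$, and since $g\geq 4$ forces $\lfloor (g+3)/2\rfloor\geq 3$, substituting this into the bound of Theorem~\ref{stima1} yields, for every non-hyperelliptic $C$,
\begin{gather*}
  2g+\operatorname{gon}(C)-4\leq 2g+\Bigl\lfloor\tfrac{g+3}{2}\Bigr\rfloor-4\leq \tfrac{5}{2}(g-1),
\end{gather*}
the last inequality being an equality exactly when $g$ is odd. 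Thus everything reduces to exhibiting, for the given germ $Y$, a non-hyperelliptic curve through which $Y$ passes and to which Theorem~\ref{stima1} applies.

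To this end I would set $\tilde Y:=j\meno(Y)\subset M_g$, which is meaningful because $j$ is injective by Torelli, and split into two cases according to whether $\tilde Y$ is contained in the hyperelliptic locus $H_g\subset M_g$. If it is, then injectivity of $j$ gives $\dim Y=\dim\tilde Y\leq\dim H_g=2g-1$, which is $\leq \tfrac{5}{2}(g-1)$ precisely because $g\geq 3$; this settles the case in which Theorem~\ref{stima1} is unavailable. If instead $\tilde Y$ is not contained in $H_g$, then a general point $[C]\in\tilde Y$ is non-hyperelliptic, where the period map is an immersion, so $\dim Y=\dim T_{[C]}Y$. Since $Y$ is totally geodesic it is so at each of its points, in particular at $[C]$; applying Theorem~\ref{stima1} at $[C]$ together with the gonality estimate above gives $\dim Y\leq 2g+\operatorname{gon}(C)-4\leq \tfrac{5}{2}(g-1)$.

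The one genuinely delicate point is the treatment of the hyperelliptic locus: since the codifferential of the period map, namely the multiplication map $S^2H^0(K_C)\ra H^0(2K_C)$, fails to be surjective along $H_g$, Theorem~\ref{stima1} cannot be invoked at a hyperelliptic curve, and one must either stay inside $H_g$ and bound $\dim Y$ by $2g-1$, or escape it by moving to a nearby non-hyperelliptic curve. The remaining content is the arithmetic check $2g+\lfloor (g+3)/2\rfloor-4\leq \tfrac{5}{2}(g-1)$, which I would carry out by distinguishing the parity of $g$.
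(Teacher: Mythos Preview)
Your proof is correct and follows the same route as the paper: invoke Theorem~\ref{stima1} together with the classical gonality bound $\operatorname{gon}(C)\leq\lfloor(g+3)/2\rfloor$. The paper's proof is a one-liner that tacitly assumes a non-hyperelliptic point on $Y$, whereas you make this explicit and separately dispose of the case $\tilde Y\subset H_g$ via $\dim H_g=2g-1\leq\tfrac{5}{2}(g-1)$; this is a harmless elaboration rather than a different argument.
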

\proof This immediately follows from Theorem \ref{stima1}, since the
gonality of a genus $g$ curve is at most $ [(g+3)/{2}]$.  \qed

\begin{cor}
  For any $g\geq 4$ and $k \geq 2$ the $k$-gonal locus is not totally
  geodesic in $A_g$.
\end{cor}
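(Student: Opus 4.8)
The plan is to play the dimension bounds of Theorems \ref{stima1} and \ref{stima2} against the known dimension of the $k$-gonal locus. Denote by $\mathcal{G}_k\subset M_g$ the locus of curves of gonality at most $k$. Recall (see \cite{acg}) that $\mathcal{G}_k$ is irreducible, that a general member has gonality exactly $k$, and that $\dim \mathcal{G}_k = 2g+2k-5$ whenever $2k<g+2$, while $\mathcal{G}_k=M_g$ otherwise. I argue by contradiction: if some $\mathcal{G}_k$ were totally geodesic in $A_g$, its germ at a general point would be a totally geodesic germ contained in the Jacobian locus, and I show that its dimension exceeds the allowed bound.

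Assume first $k\geq 3$ and suppose $\mathcal{G}_k$ is totally geodesic. If $\mathcal{G}_k=M_g$, then Theorem \ref{stima2} bounds any such germ by $\tfrac52(g-1)$, whereas $\dim M_g=3g-3>\tfrac52(g-1)$ for $g\geq 4$, a contradiction. If instead $\mathcal{G}_k\subsetneq M_g$, choose $C$ general in $\mathcal{G}_k$, so that $C$ is $k$-gonal; Theorem \ref{stima1} then bounds the germ by $2g+k-4$, while
\[
  \dim \mathcal{G}_k = 2g+2k-5 > 2g+k-4
\]
because $k\geq 2$, again a contradiction. This disposes of every $k\geq 3$.

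The remaining case $k=2$, the hyperelliptic locus $\mathcal{H}_g=\mathcal{G}_2$, is the genuine obstacle, because $\rho$ and Theorem \ref{copito} are available only for non-hyperelliptic curves: at a hyperelliptic $C$ the Torelli map is not an immersion, its differential having kernel the anti-invariant space $H^1(C,T_C)^-$ of dimension $g-2$, while $T_{[C]}\mathcal{H}_g$ is the invariant space $H^1(C,T_C)^+$. One checks, however, that the conormal of the immersion $\mathcal{H}_g\hookrightarrow A_g$ at $[C]$ is again $I_2(K_C)$, so there is a second fundamental form $\rho^{\mathcal{H}}\colon I_2(K_C)\to S^2\big(H^0(2K_C)^+\big)$, the local Hodge--Gaussian computation underlying Theorem \ref{copito} applies verbatim and expresses $\rho^{\mathcal H}$ on Schiffer variations by the same formulas, and $\mathcal{H}_g$ is totally geodesic precisely when $\rho^{\mathcal{H}}(Q)$ vanishes on $S^2 T_{[C]}\mathcal{H}_g$ for all $Q\in I_2$. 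Since a Schiffer variation $\xi_u$ at a Weierstrass point satisfies $\iota_*\xi_u=\xi_{-u}=\xi_u$ (as $d\iota=-1$ there and $\xi$ is quadratic in $u$), it lies in $T_{[C]}\mathcal{H}_g$; and for a pair of conjugate points $P,\iota(P)$ the combination $\xi_u+\xi_{d\iota(u)}$ is likewise $\iota$-invariant.

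The delicate feature is that the diagonal obstruction is vacuous here. Since $I_2(K_C)$ is generated by the rank-$4$ quadrics given by the $2\times 2$ minors of the catalecticant of the canonical rational normal curve, and for such a quadric the rank-$4$ formula $\mu_2(Q)=\mu_{1,F}(x\wedge y)\,\mu_{1,M}(t_1\wedge t_2)$ of \cite{cf2} has $\mu_{1,F}(x\wedge y)$ equal to the ramification of the hyperelliptic map, every $\mu_2(Q)$ vanishes at all Weierstrass points; hence $\rho(Q)(\xi_u\odot\xi_u)=0$ for Schiffer variations at Weierstrass points (cf. \eqref{muformula} and Theorem \ref{copito}). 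The argument must therefore invoke the off-diagonal formula: for $v=\xi_u+\xi_{d\iota(u)}$ attached to conjugate non-Weierstrass points one expands $\rho(Q)(v\odot v)$ into the two diagonal terms $\mu_2(Q)$ at $P$ and $\iota(P)$ plus a cross term proportional to $\eta_P(u)(\iota(P))\,Q\big(u,d\iota(u)\big)$, and one must exhibit $Q\in I_2$ and a conjugate pair making the sum nonzero --- equivalently, by Theorem \ref{prodotto}, showing that the symmetric section $Q\cdot\teta$ does not vanish identically on the $\iota$-invariant Schiffer directions. Carrying out this explicit hyperelliptic computation, using the $\Aut(C)$-invariance of $\teta$ (Lemma \ref{invarho}) to keep track of the $\iota$-action, is the step I expect to be hardest.
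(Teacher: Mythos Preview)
Your treatment of $k\geq 3$ is essentially the paper's: compare the dimension of the $k$-gonal locus with the bound of Theorem~\ref{stima1} at a general point. (Your extra case split for when $\mathcal{G}_k=M_g$, handled via Theorem~\ref{stima2}, is harmless and arguably more careful than the paper, which writes $\dim=2g+2k-5$ without further comment.)

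The genuine gap is the case $k=2$. The paper does not argue this at all: it simply cites Proposition~5.1 of \cite{cf1}, where the hyperelliptic locus was already shown not to be totally geodesic. Your plan to redo this from scratch via a second fundamental form $\rho^{\mathcal H}$ for $\mathcal{H}_g\hookrightarrow A_g$ is plausible in outline, but the decisive step---exhibiting a quadric $Q\in I_2(K_C)$ and an $\iota$-invariant Schiffer combination $v$ with $\rho^{\mathcal H}(Q)(v\odot v)\neq 0$ via the off-diagonal term---is, as you yourself flag, not carried out. Until that computation is actually done (and the identification of the conormal with $I_2(K_C)$ and the validity of the Hodge--Gaussian formula on the hyperelliptic locus are verified rather than asserted), the $k=2$ case remains open in your write-up. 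For the purposes of this corollary the correct move is simply to invoke \cite{cf1}; a self-contained argument along your lines is a separate task.
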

\begin{proof}
  For $k =2$ this is Proposition 5.1 in \cite{cf1}.  If $k \geq 3$ the
  dimension of the $k$-gonal locus is $2g + 2k-5 > 2g +k -4$. Hence
  the statement follows immediately from Theorem \ref{stima1}.
\end{proof}

\begin{remark}
  As it is evident from the proof, gonality is used to construct a
  quadric $Q \in I_2(K_C)$ of rank 4 such that $\rho(Q)$ has large
  rank. It seems unlikely that gonality plays any role in this
  problem.  In fact we expect the existence of $Q \in I_2(K_C)$ with
  image $\rho(Q)$ a nondegenerate quadric on $H^1(C,T_C)$. This would
  give the upper bound $\frac{3}{2}(g-1)$ for the dimension of a germ
  of a totally geodesic submanifold.  On the other hand, the
    map $\rho$ is injective. This can be deduced from
    \cite[Cor. 3.4]{cf1} or from Theorem \ref{prodotto}, since the
    form $\teta$ is non-zero. Therefore $\rho(I_2(K_C))$ is a linear
    system of quadrics of dimension $\frac{(g-1)(g-4)}{2}$ on
    $\PP(H^1(C,T_C)) =\PP^{3g-4}$.  This already gives an upper bound
    for the dimension of a submanifold $Y$ as in Theorem
    \ref{stima2}. Indeed for any point $[C]\in Y$, the tangent space
    $T_{[C]} Y \subset H^1(C,T_C)$ is contained in the base locus of
    $\rho(I_2(K_C))$.  This means that $\rho(I_2(K_C))$ is contained
    in the space of quadrics $ q \in S^2H^0(C,2K_C)$ that vanish on
    $T_{[C]}Y$. This yields the bound
    \begin{gather*}
      \dim Y \leq \frac{ -1 + \sqrt{ 32g^2 -40g +1}} {2}.
    \end{gather*}
    Nevertheless for any $g\geq 2$ this bound is weaker than the one
    provided in Theorem \ref{stima2}.  At any case the study of the
    base locus of the linear system $\rho(I_2(K_C))$ should clearly
    improve the understanding of totally geodesic submanifolds.  If
    one could prove that the base locus is empty, one would rule out
    the existence of totally geodesic submanifolds passing through
    $[C]$. However to proceed in this direction it is probably
    necessary to better understand the form $\teta$.
\begin{remark}
  Observe that for $g \geq 5$, the non existence of germs of totally
  geodesic hypersurfaces follows directly from Theorem \ref{copito}
  and \cite[Thm.  6.1]{cf2}.  In fact if $Y$ is a hypersurface in
  $M_g$, it passes through a non-trigonal curve $[C]$. Since $\PP
  T_{[C]} Y$ intersects the bicanonical curve in $\PP H^1(T_C)$,
  $T_{[C]} Y$ contains a Schiffer variation $\xi_x$, for some $x \in
  C$. By \cite[Thm. 6.1]{cf2}, there exists a quadric $Q \in I_2(K_C)$
  such that $\mu_2(Q)(x) \neq 0$, so $\rho(Q)(\xi_x \odot \xi_x) \neq
  0$ by Theorem \ref{copito}.
\end{remark}

\end{remark}

\section{Families of cyclic covers of the projective line}

\begin{say}
  Let $C$ be a curve of genus $g \geq 4$ and let $G$ be
  a subgroup of the group of automorphisms of $C$. This yields an
  inclusion of $G$ in the mapping class group $\Gamma_g$
  \cite{gavino}.  So $G$ acts on the Teichm\"uller space $T_g$ and we
  denote by $T_g^G$ the set of fixed points of $G$ on $T_g$, which is
  a nonempty by the solution of Nielsen realization problem
  \cite{ke,tro,ca}, and is a complex submanifold of $T_g$.  The
  tangent space to $T_g^G$ at a point $[C]$ is given by $H^1(C,
  T_{C})^{G}$.  Moreover $T_g^G$ parametrizes marked curves $C$
  endowed with a holomorphic action of $G$ of the given topological
  type and there is a universal family ${\mathcal C} \ra T_g^G$
  containing all such curves. If $t\in T_g^G$ we denote by $C_t$ the
  corresponding curve.
\end{say}

\begin{say}\label{rhotilde}
  Let us now consider the period map at the level of Teichm\"uller
  spaces $T_g \ra H_g$. This is an immersion outside the hyperelliptic
  locus, and we will consider its restriction to $T_g^G$. Denote by
  $Z(G)$ its image in $A_g$.  Given a point $t \in T_g^G$ such that
  $C_t$ is non-hyperelliptic, the cotangent spaces fit in the exact
  sequence
  \begin{equation}\nonumber
    0 \ra N^* \ra S^2 H^0(K_{C_t})  \stackrel{\pi}{\ra} H^0 (2K_{C_t})^{G} \ra 0,
  \end{equation}
  where $N^*$ is the conormal space to $Z(G) \subset A_g$ at the point
  $J(C_t)$.  Since $\pi$ is $G$-invariant, $N^*$ is a $G$-submodule.
  Let
  \begin{equation}\nonumber
    \tilde{\rho}: N^* \ra H^0 (2K_{C_t})^{G}  \otimes H^0 (2K_{C_t})^{G} 
  \end{equation}
  denote the second fundamental form of $Z(G)$ at point $J(C_t)$.
  
\end{say}

\begin{lemma}
  \label{tro-equiv}
  The second fundamental form $\tilde{\rho}$ is $G$-equivariant.
 
\end{lemma}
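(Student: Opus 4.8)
The plan is to show that $\tilde\rho$ is $G$-equivariant by reducing it to the equivariance of the second fundamental form $\rho$ of the full period map $j: M_g \ra A_g$, which in turn follows from the invariance of the section $\teta$ established in Lemma \ref{invarho}. The key conceptual point is that $Z(G)$ is the image of the fixed-point locus $T_g^G$ under the period map, and the totally geodesic/second-fundamental-form structure on $Z(G)$ is inherited from that of $M_g \subset A_g$ by restriction to the $G$-invariant parts of the relevant tangent and conormal spaces.

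Let me think about what $G$-equivariance of $\tilde\rho$ should even mean here. The group $G$ acts on $T_x C_t$ and hence (via Schiffer variations) on $H^1(C_t, T_{C_t})$; it acts on $H^0(K_{C_t})$, on $S^2 H^0(K_{C_t})$, on $H^0(2K_{C_t})$, and therefore on $I_2(K_{C_t}) = N^*$ (the conormal space is a $G$-submodule, as noted in \ref{rhotilde} since $\pi$ is $G$-invariant). The map $\tilde\rho: N^* \ra H^0(2K_{C_t})^G \otimes H^0(2K_{C_t})^G$ is, I claim, nothing but the restriction of the ambient $\rho: I_2(K_{C_t}) \ra S^2 H^0(2K_{C_t})$ to the $G$-invariant subspace $N^* = I_2^G$, with values landing automatically in the invariant part. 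So the statement to prove is that $\rho(\sigma \cdot Q) = \sigma \cdot \rho(Q)$ for all $\sigma \in G$ and $Q \in I_2(K_{C_t})$, where the actions on source and target are the natural ones.

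First I would make precise that $\rho$ itself is $\Aut(C)$-equivariant. By Theorem \ref{prodotto}, $\rho(Q) = Q \cdot \teta$ under the multiplication map $H^0(S, K_S(-2\Delta)) \ra H^0(S, 2K_S)$. The diagonal action of $\sigma \in \Aut(C)$ on $S = C \times C$ lifts to $K_S$ and to $L = K_S(2\Delta)$, hence acts on both $H^0(S, K_S(-2\Delta))$ and $H^0(S, 2K_S)$, and the multiplication map $Q \mapsto Q \cdot \teta$ is manifestly compatible with this action precisely because $\teta$ is $\Aut(C)$-invariant by Lemma \ref{invarho}: indeed
\begin{gather*}
  \rho(\sigma \cdot Q) = (\sigma \cdot Q) \cdot \teta = (\sigma \cdot Q) \cdot (\sigma \cdot \teta) = \sigma \cdot (Q \cdot \teta) = \sigma \cdot \rho(Q).
\end{gather*}
This is the crux: equivariance of $\rho$ is a formal consequence of the invariance of $\teta$ and the multiplicativity of the formula in Theorem \ref{prodotto}.

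It then remains to identify $\tilde\rho$ with the restriction of $\rho$ to the invariant part, and here I expect the main obstacle to lie. One must check that the second fundamental form of the \emph{restricted} period map $T_g^G \ra H_g$, computed intrinsically on $Z(G)$ as in \ref{rhotilde}, agrees with the restriction of the ambient second fundamental form $\rho$ to $N^* = I_2^G \subset I_2$. The cleanest way is to observe that $T_g^G$ is a totally geodesic submanifold of $T_g$ for the Siegel metric (the fixed locus of an isometric group action is always totally geodesic), so its embedding into $A_g$ is the composition $T_g^G \hookrightarrow T_g \ra H_g$, and the second fundamental form of the composite decomposes according to the second fundamental forms of the two stages; the contribution from the totally geodesic inclusion $T_g^G \hookrightarrow T_g$ vanishes. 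Consequently $\tilde\rho$ is exactly $\rho$ evaluated on $G$-invariant conormal vectors and $G$-invariant tangent directions, which is a $G$-equivariant restriction of the $G$-equivariant map $\rho$. Since the equivariance of $\rho$ has just been established, the $G$-equivariance of $\tilde\rho$ follows at once.
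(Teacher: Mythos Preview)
Your identification $N^* = I_2^G$ is incorrect, and this is where the argument breaks. From the exact sequence in \ref{rhotilde}
\begin{gather*}
0 \to N^* \to S^2 H^0(K_{C_t}) \to H^0(2K_{C_t})^G \to 0
\end{gather*}
one sees that $N^*$ is the kernel of the composite of the multiplication map $m$ with the projection onto $G$-invariants. Hence $I_2 = \ker m$ sits \emph{inside} $N^*$ as a (generally proper) subspace, and $N^*$ is strictly larger than $I_2^G$. Your proof of the $G$-equivariance of $\rho: I_2 \to S^2 H^0(2K_{C_t})$ via Theorem \ref{prodotto} and Lemma \ref{invarho} is correct (and is in fact invoked later in the paper, in the proof of Proposition \ref{suffmoo1}), but it does not by itself yield the equivariance of $\tilde\rho$ on all of $N^*$.

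Your second paragraph contains the seed of a valid repair, but it is not executed. The decomposition of the second fundamental form for the chain $T_g^G \subset T_g \to H_g$, together with the fact that $T_g^G$ is totally geodesic in $T_g$, gives in dual form: $\tilde\rho\restr{I_2}$ equals $\rho$ followed by restriction $S^2 H^0(2K_{C_t}) \to S^2 H^0(2K_{C_t})^G$, \emph{and} $\tilde\rho$ vanishes on the orthogonal complement $N^*_{Z(G)/M_g}$ of $I_2$ in $N^*$. Both pieces are then $G$-equivariant (the first by your $\teta$ argument, the second trivially), and the splitting is $G$-stable since the metric is $G$-invariant. You instead conclude that ``$\tilde\rho$ is exactly $\rho$ evaluated on $G$-invariant conormal vectors'', which repeats the mistaken identification of the domain.

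For comparison, the paper's proof is entirely different and more direct: it writes $\tilde\rho(a) = \pi(\nabla a)$ and checks that the connection $\nabla$ on $S^2 f_* \omega_{\mathcal{C}/T_g^G}$ is $G$-invariant, because the Gauss-Manin connection on $R^1 f_* \C$ sends flat sections to flat sections and the Hodge projection $\pi^{1,0}$ is $G$-equivariant. This handles all of $N^*$ at once and uses neither $\rho$ nor $\teta$.
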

\begin{proof}
  Recall that by definition, given $a \in N^*$, we have
  $\tilde{\rho}(a) = \pi ( \nabla(a))$, where $\nabla$ is the
  Gauss-Manin connection on $S^2f_*\om_{\mathcal{C}/T_g^G}$ of the
  family $\mathcal C\stackrel{f}{ \ra} T_g^G$.  Since $\pi$ is
  $G$-invariant, it suffices to prove that $\nabla$ is $G$-invariant,
  i.e. $g\meno \nabla g = \nabla$ for every $g\in G$. Observe that the
  flat connection $\nabla^{GM}$ on $R^1 f_* \C$ is $G$-invariant,
  since the $G$--action maps flat sections to flat sections.  Let $
  \pi^{1,0}: H^1(C_t, \C) \ra H^{1,0}(C_t)$ be the projection. Then
  $\pi^{1,0} \circ \nabla^{GM}$ is the connection on the Hodge bundle
  $ f_*\om_{\mathcal{C}/T_g^G}$.  Since the action of $G$ on
  ${\mathcal C}$ is holomorphic, the projection $ \pi^{1,0}$ is
  $G$-equivariant, hence $\pi^{1,0} \circ \nabla^{GM}$ is also a
  $G$-invariant connection.  Finally $\nabla $ is the connection
  induced by $\pi^{1,0} \circ \nabla^{GM}$ on
  $S^2f_*\om_{\mathcal{C}/T_g^G}$.
    % where $ \pi^{1,0}: H^1(C_t, \C) \ra H^{1,0}(C_t)$.  But $
    % \pi^{1,0}$
    % is $G$-invariant since the action of $G$ on ${\mathcal C}$ is
    % holomorphic.
    The result follows. 
\end{proof}

\begin{prop}
  \label{suffmoo}
  If there are no nonzero quadrics in $I_2(K_{C_t})$, which are
  invariant under the action of the group $G$, then $Z(G)$ is totally
  geodesic.
\end{prop}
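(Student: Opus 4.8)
The plan is to show that the second fundamental form $\tilde\rho$ of $Z(G)$ vanishes identically, since a submanifold whose second fundamental form is zero is totally geodesic. By Lemma \ref{tro-equiv}, $\tilde\rho : N^* \ra H^0(2K_{C_t})^G \otimes H^0(2K_{C_t})^G$ is $G$-equivariant. The source $N^*$ is precisely the conormal space to $Z(G)$, which sits inside $S^2 H^0(K_{C_t})$; I would first want to identify $N^*$ concretely in terms of the quadrics $I_2(K_{C_t})$. Indeed, $Z(G)$ is the image of $T_g^G$ under the period map, and $T_g^G$ is itself cut out inside $T_g$ (locally) by the $G$-action, so its conormal space should be the $G$-invariant part of the conormal space of the full Jacobian locus, namely $N^* = I_2(K_{C_t})^G$.

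The heart of the argument is then the following. By Theorem \ref{prodotto}, the second fundamental form $\rho$ of the full period map $M_g \hookrightarrow A_g$ is multiplication by the section $\teta$, and by Lemma \ref{invarho} the section $\teta$ is $\Aut(C_t)$-invariant, hence $G$-invariant. Restricting to the submanifold $Z(G)$, I expect $\tilde\rho$ to be obtained by restricting $\rho$ to the $G$-invariant quadrics: that is, for $Q \in I_2(K_{C_t})^G = N^*$, the image $\tilde\rho(Q)$ is essentially $Q \cd \teta$. Now here is the crucial point: the hypothesis says $I_2(K_{C_t})^G = 0$. Therefore $N^* = 0$, so there are no nonzero conormal vectors at all, and $\tilde\rho$ is the zero map trivially (its domain is zero). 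Consequently $Z(G)$ is totally geodesic.

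First I would carefully justify the identification $N^* = I_2(K_{C_t})^G$, using the exact sequence in \ref{rhotilde} together with the exact sequence $0 \ra I_2 \ra S^2 H^0(K_C) \ra H^0(2K_C) \ra 0$ from Section 2 and taking $G$-invariants. Since taking invariants under a finite group is exact over a field of characteristic zero, the invariant part of the multiplication map has kernel exactly $I_2(K_{C_t})^G$, which must coincide with $N^*$ because the image of the invariant multiplication map is $H^0(2K_{C_t})^G$. Once this identification is in place, the hypothesis immediately forces $N^* = 0$.

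The main obstacle, then, is not really a hard estimate but rather making the identification $N^* = I_2(K_{C_t})^G$ and the compatibility of $\tilde\rho$ with $\rho$ completely rigorous: one must check that the Gauss-Manin connection $\nabla$ used to define $\tilde\rho$ in Lemma \ref{tro-equiv} is the restriction of the connection used to define $\rho$, so that the two second fundamental forms genuinely agree on invariant classes. Strictly speaking, for the conclusion one only needs that $N^* = 0$ implies $Z(G)$ is totally geodesic; a germ of submanifold whose conormal space vanishes at a point is one where the period map is, infinitesimally, surjective onto the normal directions it is supposed to fill, so the second fundamental form has empty domain and vanishes. I would phrase the final step so that the totally geodesic conclusion follows from $\tilde\rho \equiv 0$, which here is automatic because $\tilde\rho$ is a linear map out of the zero space $N^* = 0$.
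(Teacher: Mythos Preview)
Your identification $N^* = I_2(K_{C_t})^G$ is incorrect, and this is the central gap. Recall from \S\ref{rhotilde} that $N^*$ is the kernel of $\pi: S^2 H^0(K_{C_t}) \ra H^0(2K_{C_t})^G$, where the middle term is the \emph{full} cotangent space of $A_g$, not its $G$-invariant part. Geometrically, $Z(G) \subset j(M_g) \subset A_g$, so the conormal space of $Z(G)$ in $A_g$ \emph{contains} the conormal space $I_2$ of $j(M_g)$ in $A_g$; it is larger, not smaller. Concretely, $N^*$ consists of all quadrics $Q$ whose image $m(Q)$ has zero $G$-invariant component, so $N^* \supset I_2$ (all of it). Your argument that ``taking invariants is exact'' yields $0 \ra I_2^G \ra (S^2H^0(K_{C_t}))^G \ra H^0(2K_{C_t})^G \ra 0$, but this is not the sequence defining $N^*$: the middle term there is $S^2H^0(K_{C_t})$, not $(S^2H^0(K_{C_t}))^G$.

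What is true, and what the paper uses, is that $(N^*)^G \subset I_2(K_{C_t})$: if $Q \in N^*$ is $G$-invariant, then $m(Q)$ is $G$-invariant (since $m$ is equivariant), but $m(Q)$ also has vanishing invariant part (since $Q\in N^*$), so $m(Q)=0$. The hypothesis then gives $(N^*)^G = 0$. This is not enough to conclude that $N^*=0$; one needs the additional representation-theoretic step: since $\tilde\rho$ is $G$-equivariant (Lemma \ref{tro-equiv}) and the target $H^0(2K_{C_t})^G \otimes H^0(2K_{C_t})^G$ is a \emph{trivial} $G$-module, Schur's lemma forces $\tilde\rho$ to vanish on every nontrivial isotypic component of $N^*$. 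Combined with $(N^*)^G=0$, this gives $\tilde\rho = 0$. The appeal to Theorem \ref{prodotto} and Lemma \ref{invarho} is unnecessary here.
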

\proof Consider the cotangent sequence of the period map $j: { T}_g
\rightarrow { H}_g$ at the point $[C_t]$ and its restriction to
$T_g^G$:
\begin{gather}
  \label{diatro}
  \begindc{\commdiag}[4]
  \obj(0 ,5) [a1]{$0$} \obj(12,5)[a2]{$ I_2(K_{C_t})$}
  \obj(30,5)[a3]{$S^2H^0( K_{C_t})$} \obj(50,5)[a4]{$H^0(2K_{C_t})$}
  \obj(65,5)[a5]{$0$} \obj(0, -5) [b1]{$0$} \obj(12,-5)[b2]{$ N^*$}
  \obj(30,-5)[b3]{$S^2H^0( K_{C_t})$}
  \obj(50,-5)[b4]{$H^0(2K_{C_t})^G$} \obj(65,-5)[b5]{$0.$}
  \mor{a1}{a2}{} \mor{b1}{b2}{} \mor{a4}{a5}{} \mor{b4}{b5}{}
  \mor{a2}{a3}{} \mor{b2}{b3}{} \mor{b3}{b4}{} \mor{a3}{a4}{$m$}
  \mor{a3}{b3}{$=$} \mor{a2}{b2}{}[-1,5] \mor{a4}{b4}{}[-1,7]
  \enddc
\end{gather}
%
%\begin{gather}
%  \label{diatro}
%  \begindc{\commdiag}[50]
%  \obj(0,5) [a1]{$0$} \obj(12,5)[a2]{$ I_2(K_{C_t})$}
%  \obj(30,5)[a3]{$S^2H^0( K_{C_t})$} \obj(50,5)[a4]{$H^0(2K_{C_t})$}
%  \obj(65,5)[a5]{$0$} \obj(0, -5) [b1]{$0$} \obj(12,-5)[b2]{$ N^*$}
%  \obj(30,-5)[b3]{$S^2H^0( K_{C_t})$}
%  \obj(50,-5)[b4]{$H^0(2K_{C_t})^G$} \obj(65,-5)[b5]{$0.$}
%  \mor{a1}{a2}{} \mor{b1}{b2}{} \mor{a4}{a5}{} \mor{b4}{b5}{}
%  \mor{a2}{a3}{} \mor{b2}{b3}{} \mor{b3}{b4}{} \mor{a3}{a4}{$m$}
%  \mor{a3}{b3}{$=$} \mor{a2}{b2}{}[\atright,\injectionarrow]%[-1,5]
% \mor{a4}{b4}{}[\atright, \surjectivearrow]%[-1,7]
%  \enddc
%\end{gather}
(The notation is as in \ref{rhotilde}.)  Since the map $m$ is
$G$-equivariant, any $G$-invariant element in $N^*$ lies in
$I_2(K_{C_t})$.  Hence by the assumption there are no nontrivial
invariant elements in $N^*$, i.e. the trivial representation does not
appear in the decomposition of $N^*$.  On the other hand the
representation $H^0(2K_{C_t})^G$ is trivial.  By Lemma \ref{tro-equiv}
$\tro$ is a morphism of $G$-representations.  Hence Schur lemma
implies that $\tro$ is the trivial map.  \qed

\begin{say}
  Now we will consider the case in which $G = \Zeta/m\Zeta$, $m \geq
  3$ and $C/G \cong \PP^1$.  These families have been studied by
  various authors, e.g. \cite{dejong-noot,moo,rohde}, since they
  provide the only known examples of totally geodesic submanifolds
  contained in the Jacobian locus.  These examples are in fact Shimura
  varieties. A complete list of the Shimura varieties that can be
  obtained in this way has been given in \cite{moo}, which we follow
  for the notation in the rest of the paper.

  We identify $G$ also with the group of $m$-th roots of unity.  Fix
  an integer $N \geq 4$, together with an $N$-tuple $a=(a_1,...,a_N)$
  of positive integers, such that $\gcd(m,a_1,...,a_N) =1$, $a_i \not
  \equiv 0 \mod m$ and $\sum_{i=1}^N a_i \equiv 0 \mod m$. Given
  distinct points $t_1,...,t_N \in \PP^1$ there is a well-defined
  curve $C_t$ which is a cyclic cover of $\PP^1$ with covering group
  $\Zeta/m\Zeta$, branch points $t_i$ and local monodromy $a_i$ at
  $t_i$.  It is the normalization of the affine curve
  \begin{equation}
    \label{cyclic}
    y^m = \prod_{i=1}^N (x-t_i)^{a_i}.
  \end{equation} 
  Varying the branch points $t=(t_1, \lds, t_N)$ one obtains a
  $(N-3)$-dimensional family of curves $\mathcal{C} \ra B$.  There is
  an action of $G$ on $\mathcal{C}$ given by the rule $ \zeta \cd
  (x,y,t) := (x, \zeta \cdot y,t)$, $ \zeta \in G$.  Thus triples
  $(m,N,a)$ parametrize these families and two triples $(m,N,a)$ and
  $(m',N',a')$ yield equivalent families if and only if $m=m'$, $N =
  N'$ and the classes of $a$ and $a'$ in $(\Zeta/m \Zeta)^N$ are in
  the same orbit under the action of $(\Zeta/m \Zeta)^* \times
  \Sigma_N$, where $(\Zeta/m \Zeta)^*$ acts diagonally by
  multiplication and the symmetric group $\Sigma_N$ acts by
  permutation of the indices.  Notice that to fix the class of the
  triple $(m,N,a)$ is equivalent to fixing the topological type of the
  $G$-action.  Thus $B$ and $T_g^G$ have the same image in $M_g$ and
  we will consider $Z(m,N,a):=Z(G) \subset A_g$ the
  $(N-3)$-dimensional subvariety given by the Jacobians of the curves
  $C_t$, $t\in B$.  By the Hurwitz formula
  \begin{gather*}
    g = 1 + \frac{(N-2)m - \sum_{i=1}^N r_i}{2}
  \end{gather*}
  with $r_i:=\gcd(m,a_i)$.  A basis of $H^0(C_t, K_{C_t})$ is given as
  follows. For $i \in \{1,...,N\}$ and $n \in \Zeta$ set
  \begin{gather*}
    l(i,n):= \Bigl [{\frac{-na_i}{m}} \Bigr]\qquad \quad d_n = -1 +
    \sum_{i=1}^N \Bigl \sx \frac{-na_i}{m}\Bigr \xs
  \end{gather*}
  (Here $[ a ]$ and $\sx a \xs $ denote the integral and fractional
  parts of $a \in \R$.)  Since $G$ acts on $C_t$, there is a
  decomposition $H^0(C_t, K_{C_t}) = \oplus_{n=0}^{m-1} V_n$, where
  $V_n$ is the subspace of 1-forms $\om$ such that $\zeta \cd \om =
  \zeta^n \om$.  Then $V_0=\{0\}$, while for $n =1, \lds, m-1$ a basis
  of $V_n$ is provided by the forms that have the following expression
  in the model \eqref{cyclic}:
  \begin{equation}
    \label{forms}
    \omega_{n,\nu} := y^n \cdot (x-t_1)^{\nu} \cdot
    \prod_{i=1}^N (x-t_i)^{l(i,n)} \cdot dx \qquad  0 \leq \nu \leq d_n -1.
  \end{equation}
\end{say}

\begin{remark}
  In \cite{moo}, Moonen proved that there is a finite list of triples
  $(m,N,a)$ such that the corresponding subvariety $Z = Z(m, N,a)
  \subset A_g$ is a Shimura variety.  By \cite{moonen-JAG} a Shimura
  variety is a totally geodesic algebraic submanifold of $A_g$ that
  contains a complex multiplication point.  Therefore the families in
  Moonen's list are totally geodesic.  Using Proposition \ref{suffmoo}
  we can immediately verify that all these families are totally
  geodesic. In fact in all those cases there are no nontrivial
  quadrics in $I_2(K_{C_t}) $ which are invariant under the action of
  the cyclic group.
\end{remark}

\begin{prop}
  \label{suffmoo1}
  Consider the family ${\mathcal C} \rightarrow B$ associated to the
  triple $(m,N,a)$ as above and the corresponding subvariety $Z = Z(m,
  N,a) \subset A_g$ given by the Jacobians of the curves $C_t$.
  Assume that not all the curves $C_t$ are hyperelliptic.  If there
  exists an integer $n \in \{1,...,m-1\}$ such that $d_n \geq 2$,
  $d_{m-n} \geq 2$ and $n \neq m-n$, then $Z$ is not totally geodesic.
\end{prop}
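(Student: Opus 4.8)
The plan is to apply Proposition \ref{suffmoo}: it suffices to exhibit a nonzero $G$-invariant quadric in $I_2(K_{C_t})$, since the existence of such a quadric obstructs $Z$ from being totally geodesic (by Theorem \ref{copito}, an invariant quadric $Q$ with $\rho(Q)\neq 0$ detects nonzero second fundamental form in the invariant directions). So I would turn the problem around: rather than ruling out invariant quadrics, I construct one explicitly and show that $\rho$ does not kill it, using the eigenspace decomposition $H^0(C_t,K_{C_t}) = \oplus_{n=1}^{m-1} V_n$ and the explicit basis \eqref{forms}.

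First I would analyze how $G=\Zeta/m\Zeta$ acts on $S^2 H^0(K_{C_t})$ and on $H^0(2K_{C_t})$. Since $\zeta$ acts on $V_n$ by $\zeta^n$, it acts on $V_n \otimes V_{n'}$ by $\zeta^{n+n'}$; hence the $G$-invariant part of $S^2 H^0(K_{C_t})$ is $\bigoplus_{n+n' \equiv 0 \bmod m} (V_n \otimes V_{n'})^{\sym}$, and similarly the invariant quadratic forms in $I_2$ live in the summands with $n+n' \equiv 0 \bmod m$. The hypothesis $d_n \geq 2$, $d_{m-n}\geq 2$, $n\neq m-n$ is exactly what guarantees that the pairing $V_n \otimes V_{m-n}$ is large enough: with $d_n \geq 2$ I have at least two independent forms $\omega_{n,0},\omega_{n,1}$ in $V_n$ and likewise $\omega_{m-n,0},\omega_{m-n,1}$ in $V_{m-n}$, and the condition $n\neq m-n$ ensures these eigenspaces are genuinely distinct (so I am taking products across two different eigenspaces rather than within a single one).

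The heart of the argument is then to produce a nonzero element $Q$ in the kernel of the multiplication map $m: S^2 H^0(K_{C_t}) \ra H^0(2K_{C_t})$ that is $G$-invariant, i.e. built from the forms in $V_n$ and $V_{m-n}$. The natural candidate is a $2\times 2$ determinantal (rank-$4$) quadric
\begin{gather*}
  Q = \omega_{n,0}\odot \omega_{m-n,1} - \omega_{n,1}\odot \omega_{m-n,0},
\end{gather*}
which is automatically invariant since every term has total weight $n+(m-n)\equiv 0$. I would verify $Q\in I_2$ by the standard fact that such an antisymmetrized product of a $2$-dimensional space of sections of $V_n$ against a $2$-dimensional space in $V_{m-n}$ lies in the kernel of the symmetric multiplication (this is the Koszul/Plücker relation among the four products, analogous to the rank-$4$ quadrics used in Theorem \ref{rank}). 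The main obstacle — and the step requiring the real work — is confirming that $Q$ is genuinely nonzero as an element of $I_2(K_{C_t})$, i.e. that the four products $\omega_{n,0}\omega_{m-n,1}, \omega_{n,1}\omega_{m-n,0}$ are not forced to coincide. Here I would use the explicit monomial expressions in \eqref{forms}: the products have the shape $y^m(\text{rational function of }x)\,dx^2$, and since $y^m=\prod(x-t_i)^{a_i}$ the $y$-dependence cancels uniformly, reducing everything to comparing explicit rational differentials in $x$; the shift in the exponent $\nu$ (from $0$ to $1$) multiplies by a factor of $(x-t_1)$, so the two cross-products differ by $(x-t_1)$-weighting and cannot be proportional for generic $t$. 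The hypothesis that not all $C_t$ are hyperelliptic is what lets me invoke Theorem \ref{copito}/Proposition \ref{suffmoo} at a non-hyperelliptic fibre, completing the contradiction with total geodesy.
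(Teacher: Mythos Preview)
There is a genuine gap. You correctly construct the same $G$-invariant rank-4 quadric $Q = \omega_{n,0}\odot \omega_{m-n,1} - \omega_{n,1}\odot \omega_{m-n,0}$ as the paper, and your verification that $Q\in I_2$ and $Q\neq 0$ is fine. But your logical framing is off: Proposition~\ref{suffmoo} is a \emph{sufficient} criterion for $Z$ to be totally geodesic (no invariant quadrics $\Rightarrow$ totally geodesic), and you are invoking its converse, which is not asserted and is not automatic. The mere existence of a nonzero invariant $Q\in I_2$ does not obstruct total geodesy; nor does $\rho(Q)\neq 0$ in $S^2H^0(2K_C)$. What must be shown is that the second fundamental form $\tilde\rho$ of $Z\subset A_g$ is nonzero, i.e.\ that $\rho(Q)(v\odot v)\neq 0$ for some $v\in H^1(C,T_C)^G$. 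Since $\rho(Q)$ lies in $(S^2H^0(2K_C))^G$, which strictly contains $S^2(H^0(2K_C)^G)$, its restriction to invariant tangent directions could in principle vanish even if $\rho(Q)\neq 0$. Your proposal never produces an invariant $v$ nor evaluates $\rho(Q)$ on it.

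The paper supplies exactly this missing computation, and it is the heart of the argument. One takes a generic point $p\in C$ (away from the critical loci of the two pencils and the base locus), sets $v=\sum_{g\in G}\xi_{dg(u)}$ for $u\in T_pC$, which is manifestly $G$-invariant, and expands $\rho(Q)(v\odot v)$ via Theorem~\ref{copito}. The crucial observation is that the orbit $G\cdot p$ is a \emph{fibre} of the covering map $\phi$, which is precisely the map attached to one of the two pencils defining $Q$; hence $Q(u,dg(u))=0$ for all $g\neq 1$, so by Theorem~\ref{copito} all cross terms $\rho(Q)(\xi_u\odot\xi_{dg(u)})$ vanish. Only the diagonal terms survive, giving $\rho(Q)(v\odot v)=-2\pi i\,m\,\mu_2(Q)(p)$, and $\mu_2(Q)(p)\neq 0$ by the choice of $p$ (using $\mu_2(Q)=\mu_{1,F}\cdot\mu_{1,M}$ for a rank-4 quadric). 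This step---exploiting that the $G$-orbit is a fibre of one of the pencils so that the off-diagonal $Q$-values vanish---is the idea you are missing.
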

\proof Fix an arbitrary non-hyperelliptic curve $C=C_t$ belonging to
the family.  We use the representation \eqref{cyclic}.  Since $d_n
\geq 2$ and $d_{m-n} \geq 2$ and $n \neq m-n$, there are four distinct
forms $\omega_{n,0}, \ \omega_{n,1}, \ \omega_{m-n,0}, \
\omega_{m-n,1}$ given in \eqref{forms}. We form the quadric
\begin{gather*}
  Q := \omega_{n,0} \odot \omega_{m-n,1} - \omega_{n,1} \odot
  \omega_{m-n,0}.
\end{gather*}
One immediately sees from the definition that $Q \in I_2(K_{C})$ and
that $Q$ is $G$-invariant.  Let $D $ be the divisor of poles of the
meromorphic function $x \in \mathcal{M}(C)$.  Let $\sigma_0, \sigma_1
\in H^0(C, \OO_{C}(D))$ be the sections corresponding to the
meromorphic functions $1$ and $x$.  Assume for simplicity that $t_1
=0$, so we have $\omega_{n,1} = x \cdot \omega_{n,0}$ and
$\omega_{m-n,1} = x \cdot \omega_{m-n,0}$.  Hence the forms
$\omega_{n,0}, \omega_{m-n,0}$ can be seen as elements in $H^0(C,
K_{C}( -D))$.  Set $\tau_0 := \omega_{n,0}$, $ \tau_1:=
\omega_{m-n,0}$. The quadric $Q$ can be written as follows:
\begin{equation}
  \label{Qpencil}
  Q= \sigma_0 \tau_0 \odot \sigma_1 \tau_1- \sigma_0 \tau_1 \odot \sigma_1 \tau_0. 
\end{equation}
Denote by $\phi: C \rightarrow \PP^1$ our covering: it corresponds to
the pencil $\langle 1,x\rangle = \langle \sigma_0, \sigma_1
\rangle$. Denote by $\psi$ the map to $\PP^1$ induced by the other
pencil $\langle \tau_0, \tau_1 \rangle$. Take a point $p \in C$ that
does not belong to the set $\Crit (\phi) \cup G\cd \Crit(\psi) \cup
B$, where $B$ is the base locus of $\psi$.  Fix a nonzero vector $u
\in T_pC$.  The vector $v: = \sum_{g \in G} \xi_{dg(u)} \in H^1
(C,T_{C})$ is clearly $G$-invariant.  So it is a tangent vector to
$T_g^G$ at the point corresponding to $C$.  Hence by diagram
\eqref{diatro} we have $\rho(Q)(v \odot v) = \tro(Q)(v \odot v)$. By
Lemma \eqref{invarho} the map $\rho$ is also $G$-equivariant. So,
using Theorem \ref{copito} we get
\begin{gather*}
  \tro(Q)(v \odot v) = \rho(Q)(v \odot v) = \sum_{g_1,g_2 \in G}
  \rho(Q)(\xi_{dg_1(u)} \odot \xi_{dg_2(u)})= \\
  =|G| \cdot \bigg{(}\sum_{g \neq 1} \rho(Q)(\xi_u \odot \xi_{dg(u)})
  -2 \pi
  i \mu_2(Q)(p)\bigg{)}\\
  = -2\pi i m \cdot \bigg{(}2 \sum_{g \neq 1} Q(u, dg(u)) \cdot
  \eta_u(dg(u))+ \mu_2(Q)(p)\bigg{)}.
\end{gather*}
By \eqref{Qpencil} $Q$ is the quadric associated to the pencils $\sx
\sigma_0, \sigma_1\xs$ and $\sx \tau_0, \tau_1\xs$, corresponding to
the maps $\phi$ and $\psi$ respectively. Since the fibre of $\phi$
containing $p$ is the orbit $G\cd p$, $Q( u, g_*u) = 0$, forall $g \in
G \setminus\{1\}$.  So finally we get
\begin{gather*}
  \tro(Q)(v \odot v) = -2 \pi i m \cdot \mu_2(Q)(p),
\end{gather*}
and $\mu_2(Q)(p) \neq 0$ by our choice of $p$.  Hence $Z$ is not
totally geodesic.  \qed

\begin{cor}
  \label{suffmoo2}
  For any $m$, there is only a finite number of families which can be
  totally geodesic.
\end{cor}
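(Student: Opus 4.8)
The plan is to reduce the statement to a bound on $N$ and then extract that bound from Proposition \ref{suffmoo1} applied to the single value $n=1$. Since $\dim Z(m,N,a)=N-3$, and since for fixed $m$ and fixed $N$ the datum $a$ ranges over the finite set $(\Zeta/m\Zeta-\{0\})^N$ modulo the finite group $(\Zeta/m\Zeta)^*\times\Sigma_N$, it is enough to show that for fixed $m$ the integer $N$ is bounded whenever $Z$ can be totally geodesic. Throughout I would work with families whose general member $C_t$ is non-hyperelliptic: this is precisely the setting in which $\rho$, $\tro$ and hence total geodesy are available through the second fundamental form, and it is exactly what allows Proposition \ref{suffmoo1} to be invoked, since that proposition requires a non-hyperelliptic member of the family.

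First I would apply Proposition \ref{suffmoo1} with $n=1$. This choice is admissible because $m\geq 3$ forces $1\neq m-1$ and $2\cd 1\neq m$. Hence if $Z$ is totally geodesic we cannot have both $d_1\geq 2$ and $d_{m-1}\geq 2$, so at least one of the inequalities $d_1\leq 1$, $d_{m-1}\leq 1$ must hold. I would then read off the sizes of $d_1$ and $d_{m-1}$ from $d_n=-1+\sum_{i=1}^N\sx -na_i/m\xs$. Using $-(m-1)\equiv 1\pmod m$ one gets
\begin{gather*}
  d_1 = -1+\sum_{i=1}^N \sx -a_i/m\xs, \qquad
  d_{m-1} = -1+\sum_{i=1}^N \sx a_i/m\xs .
\end{gather*}
The crucial elementary fact is that each summand is at least $1/m$: since $a_i\not\equiv 0\pmod m$ one has both $\sx a_i/m\xs\geq 1/m$ and $\sx -a_i/m\xs\geq 1/m$. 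Therefore whichever of $d_1,d_{m-1}$ is $\leq 1$ yields $N/m\leq\sum_i\sx\pm a_i/m\xs\leq 2$, that is $N\leq 2m$. For each $m$ there are then only finitely many admissible pairs $(N,a)$ with $N\leq 2m$, hence finitely many families that can be totally geodesic.

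The inequality $N\leq 2m$ is immediate, so the genuine obstacle is the reduction in the first paragraph rather than the estimate itself. Families contained entirely in the hyperelliptic locus cannot be controlled this way: for suitable monodromy data one can exhibit all-hyperelliptic cyclic families with $N$ arbitrarily large (for instance the degree-$6$ covers $y^2=\prod_{j}(v^3-s_j)$, whose quotient by the $\Zeta/6\Zeta$-action is $\PP^1$), so such families are not bounded and must be set aside rather than counted. I therefore expect the main point to verify carefully is that the totally geodesic families being enumerated are exactly those admitting a non-hyperelliptic member, to which Proposition \ref{suffmoo1} applies; once this scope is fixed, the bound $N\leq 2m$ gives the finiteness at once.
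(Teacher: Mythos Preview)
Your proposal is correct and follows essentially the same route as the paper: invoke Proposition \ref{suffmoo1} at $n=1$ to force $d_1\leq 1$ or $d_{m-1}\leq 1$, then use $\sx\pm a_i/m\xs\geq 1/m$ to obtain $N\leq 2m$. The paper's argument is identical in substance---it merely rewrites the same estimate by grouping the summands according to the multiplicities $N_i=\#\{j:a_j=i\}$---and your explicit caveat about all-hyperelliptic families is a point the paper's proof passes over silently.
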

\proof By Proposition \ref{suffmoo1}, if there exists an integer $n
\in \{1,..., m-1\}$ such that $d_n, d_{m-n} \geq 2$, $n \neq n-m$,
then the family is not totally geodesic. So we can assume that for all
$n\in \{1,..., m-1\}$ either $d_n \leq 1$, or $d_{m-n} \leq 1$. In
particular either $d_1 \leq 1$, or $d_{m-1} \leq 1$.  Denote by $N_i$
the cardinality of the set $\{j \ | \ a_j =i\}$. Then
$\sum_{i=1}^{m-1} N_i = N$ and the relation $\sum_{i=1}^N a_i \equiv
0$ mod m becomes $\sum_{i=1}^{m-1} i N_i \equiv 0$ mod m.  But
\begin{gather*}
  d_l = -1 + \sum_{i=1}^N\Big \langle \frac{-l a_i}{m}\Big \rangle= -1
  + \sum_{i =1}^{m-1} N_i\Big \langle \frac{-l i}{m}
  \Big \rangle, \quad \text{for }l=1,\lds, m-1,\\
  \text{so} \quad d_1 = -1 + \sum_{i=1}^{m-1} \frac{m-i}{m} N_i, \quad
  d_{m-1} = -1 + \sum_{i=1}^{m-1} \frac{i}{m} N_i.
\end{gather*}
Hence for $l \in \{1, m-1\}$ we have $d_l \geq -1 + \sum_{i =1}^{m-1}
\frac{N_i}{m} = -1 + \frac{N}{m}$. So $d_l \leq 1$ implies $N \leq
2m$. Hence only a finite number of families can be totally geodesic.
\qed
  
Now we give some examples of computations for low degree ($m=3$ and
$m=5$), showing that by the previous results most of the families are
not totally geodesic.

\begin{cor}
  If $m =3$ and $g \geq 5$, then the varieties $Z(3,N,a)$ are not
  totally geodesic. If $g =4$ there is only one totally geodesic
  family given by the triple $(3,6,a)$ where $a = (1,1,1,1,1,1)$.
\end{cor}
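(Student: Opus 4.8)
The plan is to play Proposition \ref{suffmoo1} against Proposition \ref{suffmoo}. Since $m=3$ forces $r_i=\gcd(3,a_i)=1$ for all $i$, the Hurwitz formula gives $g=N-2$. Writing $N_1,N_2$ for the number of indices with $a_i\equiv 1$, respectively $\equiv 2 \pmod 3$, so that $N_1+N_2=N$, the formulas established in the proof of Corollary \ref{suffmoo2} specialize to
\begin{gather*}
  d_1=-1+\tfrac{2N_1+N_2}{3},\qquad d_2=-1+\tfrac{N_1+2N_2}{3},
\end{gather*}
whence $d_1+d_2=N-2=g$ and $d_1,d_2\geq -1+\tfrac{N}{3}$.

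I would first dispose of the case $g\geq 5$. Then $N\geq 7$, so the bound above gives $d_1,d_2\geq -1+\tfrac{7}{3}=\tfrac{4}{3}$; as the $d_i$ are integers, $d_1\geq 2$ and $d_2\geq 2$. Since $m=3$ the only available index is $n=1$, and $n=1\neq 2=m-n$, so Proposition \ref{suffmoo1} applies and $Z(3,N,a)$ is not totally geodesic, provided the family contains a non-hyperelliptic curve. This last point holds because for generic branch points the curve $C_t$ is non-hyperelliptic (indeed generically $\Aut(C_t)=G=\Zeta/3\Zeta$, so $C_t$ admits no hyperelliptic involution).

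For $g=4$ one has $N=6$, and the congruence $\sum a_i\equiv 0 \pmod 3$ reads $N_2\equiv 0\pmod 3$; with $N_1+N_2=6$ this leaves $(N_1,N_2)\in\{(6,0),(3,3),(0,6)\}$. Multiplication by $2\in(\Zeta/3\Zeta)^*$ interchanges $N_1$ and $N_2$, so up to equivalence there are exactly the two families $a=(1,1,1,2,2,2)$ and $a=(1,1,1,1,1,1)$. For $a=(1,1,1,2,2,2)$ one computes $d_1=d_2=2$, and exactly as above Proposition \ref{suffmoo1} shows this family is not totally geodesic.

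It remains to prove that $a=(1,1,1,1,1,1)$ is totally geodesic, which I would deduce from Proposition \ref{suffmoo} by showing there is no nonzero $G$-invariant quadric in $I_2(K_C)$. A non-hyperelliptic genus $4$ curve has $\dim I_2=\binom{5}{2}-9=1$, generated by the unique quadric through the canonical curve. Here $H^0(K_C)=V_1\oplus V_2$ with $\dim V_1=d_1=3$ and $\dim V_2=d_2=1$, and $G$ acts on $V_n$ by $\zeta\mapsto\zeta^n$; hence $S^2H^0(K_C)$ splits into $S^2V_1$ (character $\chi^2$), $V_1\otimes V_2$ (trivial), and $S^2V_2$ (character $\chi$). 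Using \eqref{forms} with $l(i,1)=-1$, the products $\omega_{1,\nu}\odot\omega_{1,\nu'}$ equal $y^2(x-t_1)^{\nu+\nu'}\prod_i(x-t_i)^{-2}\,dx^2$ and depend only on $\nu+\nu'\in\{0,\dots,4\}$; since these five sections are linearly independent, the $6$-dimensional $S^2V_1$ maps onto a $5$-dimensional space, so $I_2\cap S^2V_1$ is one-dimensional and spanned by $\omega_{1,0}\odot\omega_{1,2}-\omega_{1,1}\odot\omega_{1,1}$. As $\dim I_2=1$ this is all of $I_2$, so $I_2\subset S^2V_1$ carries the nontrivial character $\chi^2$ and contains no trivial subrepresentation; Proposition \ref{suffmoo} then gives that $Z(3,6,(1,1,1,1,1,1))$ is totally geodesic. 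The crux of the whole argument is this last computation: the Veronese-type relation among $\omega_{1,0},\omega_{1,1},\omega_{1,2}$ is what forces the generator of $I_2$ into $S^2V_1$ rather than into the invariant summand $V_1\otimes V_2$, and thereby makes $\tro$ trivial.
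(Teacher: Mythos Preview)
Your proof is correct, and for $g\geq 5$ and for the family $(1,1,1,2,2,2)$ it matches the paper's argument exactly. The treatment of $(1,1,1,1,1,1)$, however, is genuinely different from the paper's.

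The paper does not enumerate the $g=4$ families. Instead it argues directly: whenever $d_n=1$ for some $n\in\{1,2\}$, the entire invariant part of $S^2H^0(K_C)$ is $\omega_{n,0}\odot V_{3-n}$, and since multiplication by the nonzero section $\omega_{n,0}$ is injective on $V_{3-n}$, this maps injectively into $H^0(2K_C)$, so $I_2^G=0$ and Proposition~\ref{suffmoo} applies. Only afterwards does the paper solve the equations $d_n=1$, $N\geq 6$ to see that this forces $N=6$ and $a\equiv(1,1,1,1,1,1)$.

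Your route is more explicit: you classify the $g=4$ families first, and for $(1,1,1,1,1,1)$ you locate the unique canonical quadric inside $S^2V_1$ via the Veronese relation $\omega_{1,0}\odot\omega_{1,2}-\omega_{1,1}^{\odot 2}$, then read off its character. This is perfectly valid, but it leans on Noether's theorem (to know $\dim I_2=1$) and on a computation special to this family. The paper's argument avoids both: it never needs to know $\dim I_2$, and the observation ``$d_n=1$ makes the invariant summand $\omega_{n,0}\odot V_{3-n}$ inject under $m$'' would apply uniformly to any cyclic family with a one-dimensional eigenspace whose complementary eigenspace pairs with it trivially. In exchange, your approach makes the geometry of the canonical quadric transparent, which is a pleasant byproduct.
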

\proof If $d_1,d_2 \geq 2$, we know by Proposition \ref{suffmoo1} that
the family is not totally geodesic. So we can assume that there exists
$n \in \{1,2\}$ such that $d_n = 1$. In this case the space $S^2
H^0(K_{C_t})^{(0)}$ given by the invariant elements equals $\langle
\omega_{n,0} \odot V_{3-n}\rangle$.  So there are no nonzero invariant
elements in $I_2(K_{C_t})$ and by Proposition \ref{suffmoo} we
conclude that the family is totally geodesic.  So we have to show that
$g=4$, $N =6$ and $a = (1,1,1,1,1,1)$. Denote as above by $N_i$ the
cardinality of the set $\{j \ | \ a_j =i\}$. Then $N_1 + N_2 = N$ and
$N_1 + 2N_2 \equiv 0$ mod 3.  So
\begin{gather*}
  d_1 = -1 + \sum_{i=1}^N\Big \langle \frac{-a_i}{3}\Big \rangle = -1
  + N_1 \Big\langle \frac{-1}{3}\Big \rangle+ N_2 \Big\langle
  \frac{-2}{3}\Big \rangle
  = -1+  \frac{2}{3} N_1+ \frac{1}{3} N_2,\\
  d_2 = -1 + \sum_{i=1}^N\Big \langle \frac{-2a_i}{3}\Big \rangle = -1
  + N_1 \Big\langle \frac{-2}{3}\Big \rangle+ N_2\Big \langle
  \frac{-4}{3}\Big \rangle = -1+ \frac{1}{3} N_1+ \frac{2}{3} N_2.
\end{gather*}
By Hurwitz formula $g = N-2$. Since we are assume $g\geq 4$ we get $
N_1 + N_2 = N \geq 6$.  If $d_1 = 1$,
$$6 \leq  N_1 + N_2 \leq 2N_1 + N_2 = 6,$$
so $N = 6$, $g =4$, $N_1 =0$, $N_2 =6$, hence $a=(2,2,2,2,2,2)$ which
is equivalent to $(1,1,1,1,1,1)$.  If $d_2 = 1$ we have
$$6 \leq N_1 + N_2 \leq N_1 + 2N_2 = 6,$$
so $N=6$, $g=4$, $N_2=0$, $N_1=6$, $a= (1,1,1,1,1,1)$.  \qed

\begin{remark}
  If $m=5$, the following families are totally geodesic (see the list
  in \cite{moo}):
  \begin{enumerate}
  \item $g=4$, $N=4$, $a=(1,3,3,3)$,
  \item $g= 6$, $N=5$, $a=(2,2,2,2,2)$.
  \end{enumerate}
  (1) is the family constructed by de Jong-Noot \cite{dejong-noot}.
  Applying the criterion given in Proposition \ref{suffmoo1} we are
  able to prove that all other families are not totally geodesic
  except possibly for the following 4 cases
  \begin{enumerate}
  \item [(3)] $g=4$, $N=4$, $a=(1,1,4,4)$,
  \item [(4)] $g=4$, $N=4$, $a=(1,2,3,4)$,
  \item [(5)]$g=6$, $N=5$, $a=(1,1,1,3,4)$,
  \item [(6)]$g=6$, $N=5$, $a=(1,1,2,2,4)$,
  \item [(7)] $g=8$, $N=6$, $a=(1,1,2,2,2,2)$.
  \end{enumerate}
  (3) is contained in the hyperelliptic locus, see \cite[(5.7)]{moo}.
  (4) has been studied in detail in \cite{vangeemen}. Since one can
  check that it contains a CM point, it is not totally geodesic by
  Moonen's classification.
  
  It would be interesting to get a complete list of the families of
  cyclic coverings which are totally geodesic and to compare it with
  the list in \cite{moo}.
\end{remark}
%  
%  
%%%%%%%%%%%%%%%%%%%%%%%%%%%%%%%%%%%%%%%%%%%%%%%%%%%%%%%%%%%%%%%%%%%%%%%%%%
%
%
%
% \bibliographystyle{abbrv} \bibliography{biblio}
% 
% 
% 
% \bibitem{moonen-oort} Moonen,~B., Oort,~F., { The Torelli locus and
%   special subvarieties.} in {\em Handbook of Moduli: Volume II}
%   edited by G.~Farkas, I.~Morrison, International Press, 2013,
%   pp. 549--94.
% 
%
%
%
%%%%%%%%%%%%%%%%%%%%%%%%%%%%%%%%%%%%%%%%%%%%%%%%%%%%%%%%%%%%%%%%%%%%%%%%%%

\end{document}